\newtheorem{theorem}{Theorem}
\newtheorem{corollary}[theorem]{Corollary}
\newtheorem{definition}[theorem]{Definition}
\newtheorem{example}[theorem]{Example}
\newtheorem{lemma}[theorem]{Lemma}
\newtheorem{proposition}[theorem]{Proposition}
\newtheorem{remark}[theorem]{Remark}
\title[Diagonal sections, mchr, order statistics]
{Diagonal sections of copulas,
\\multivariate conditional hazard rates
\\
and distributions of order statistics
		\\for minimally stable lifetimes}
\author[R.\,Foschi]{Rachele Foschi}
\address{University of Pisa, Department of Economics and Management,
Via C. Ridolfi, 10, 56124 Pisa, Italy}
\email{rachele.foschi@unipi.it}
\author[G.\,Nappo]{Giovanna Nappo}
\address{University of Rome La Sapienza, Department of Mathematics,
Piazzale Aldo Moro, 5, 00185, Rome, Italy}
\email{nappo@mat.uniroma1.it}
\author[F.\,L.\,Spizzichino]
 {Fabio L.\,Spizzichino}
\address{University of Rome La Sapienza,
Piazzale Aldo Moro, 5, 00185, Rome, Italy}
\email{fabio.spizzichino@fondazione.uniroma1.it}
\begin{document}

\begin{abstract}
As a motivating problem, we aim to study some special aspects of the marginal
distributions of the order statistics for exchangeable and (more generally)
for \textit{minimally stable} non-negative random variables $T_{1},...,T_{r}$.
In any case, we assume that $T_{1},...,T_{r}$ are identically distributed,
with a common survival function $\overline{G}$ and their survival copula is
denoted by $K$. The diagonal's and subdiagonals' sections of $K$, along with
$\overline{G}$, are possible tools to describe the information needed to
recover the laws of order statistics.

When attention is restricted to the absolutely continuous case, such a joint
distribution can be described in terms of the associated multivariate
conditional hazard rate (m.c.h.r.) functions. We then study the distributions
of the order statistics of $T_{1},...,T_{r}$ also in terms of the system of
the m.c.h.r.\@ functions. We compare and, in a sense, we combine the two
different approaches in order to obtain different detailed formulas and to
analyze some probabilistic aspects for the distributions of interest. This
study also leads us to compare the two cases of exchangeable and minimally
stable variables both in terms of copulas and of m.c.h.r.\@ functions. The paper
concludes with the analysis of two remarkable special cases of stochastic
dependence, namely Archimedean copulas and load sharing models. This analysis
will allow us to provide some illustrative examples, and some discussion about
peculiar aspects of our results.
\end{abstract}

\keywords{Minimally stable random vectors, Subdiagonals of survival
copulas, Diagonal dependence, $t$-exchangeability, Absolute continuity,
Ar\-chimede\-an copulas, Load-sharing models.
}
\maketitle
%%%%%%SECTION%%%%%%%%%%%%%%%%%%%%%%%%%%%%%%%%

\section{Introduction}

\label{sec:introduction}

Concerning the basic role of the concept of copula and of the Sklar's theorem
in the analysis of stochastic dependence, a main issue is the study of the
distributions of the order statistics $X_{1:r},...,X_{r:r}$ for a set of
interdependent random variables $X_{1},...,X_{r}$. On one hand, the condition
of exchangeability is specially relevant (see in particular Galambos (1982)
\cite{Galambos-1982}) in such a study. On the other hand, the marginal
distributions of $X_{1:r},...,X_{r:r}$ are strictly related to the diagonal,
and sub-diagonal, sections of copulas (see, e.g., Jaworski~(2009)~\cite{Jaworski-2009}, Durante and Sempi (2016)~\cite{Durante-Sempi-2016}).
For these reasons, in the theory of order
statistics, the study of diagonal and sub-diagonal sections of copulas has
been mainly concentrated on the case of exchangeable random variables.

Really, in such a study, the assumption of exchangeability can at any rate be
replaced by the more general condition that, for $d=2,...,r-1$, all the
$d$-dimensional diagonal sections of copulas do coincide. Such a condition has
been attracting more and more interest in the recent literature, where it has
been however designated by means of different terminologies. In fact, such a
condition can actually manifest under different mathematical forms, as we will
discuss in details. For our purposes it is specially convenient to look at it
as the condition that $X_{1},...,X_{r}$ are minimally stable (see Definition~\ref{def:min-stable} below).

In this note we concentrate attention on the case of non-negative, minimally
stable, random variables which we denote by $T_{1},...,T_{r}$.

Generally, concerning with non-negative random variables, stochastic
dependence can also be conveniently described in terms of stochastic
intensities of related counting processes. See in particular Arjas (1981)
\cite{Arjas-1981}, Bremaud (1981)~\cite{Bremaud-1981}, Arjas and Norros
\cite{Arjas-Norros-1989-91}. Such a description, in particular, can be based
on the knowledge of the so-called \textit{multivariate conditional hazard
rates (m.c.h.r.)} functions, when attention is restricted to the absolutely
continuous case (see in particular the papers by Shaked and Shanthikumar
\cite{Shaked-Shanthikumar-1990, Shaked-Shanthikumar-2007,
Shaked-Shanthikumar-2015}). In such a case the family of those functions gives
rise to a method to describe a joint distribution, which is alternative to the
one based on copulas and marginal distributions or on the joint density function.

From an analytical view-point the two methods are actually equivalent in that
a formula is clearly available to obtain in terms of the joint density the
family of the m.c.h.r.\@ functions, just in view of the very definition of them.
On the other hand, an "inverse" formula permits to recover the joint density
function when the m.c.h.r.\@ functions are known. As a matter of fact, however,
these formulas are not easily handleable in general cases. The two methods,
furthermore, are respectively apt to explain completely different aspects of
stochastic dependence.

In this paper we aim to establish a bridge between the two different
approaches. Maintaining the attention focused on the minimally stable case,
then, we are primarily interested in the relations tying the system of the
diagonal and subdiagonal sections with the system formed by the m.c.h.r.
functions. Such relations will allow us to detect, both in terms of copulas
and in terms of the m.c.h.r.\@ functions, which are the minimal sets of
functions able to convey sufficient information to recover the family of the
marginal distributions of the order statistics $T_{1:r},...,T_{r:r}$.

In such a framework, interesting questions also concern with understanding the
real difference between the cases when $T_{1},...,T_{r}$ are exchangeable and
when they are minimally stable. On this purpose, the differences between the
two properties will be detailed both using the language of copulas and the
language of the m.c.h.r.\@ functions. Still by using and combining the two
approaches, we will also face the problem of constructing examples of random
variables $T_{1},...,T_{r}$ which are minimally stable but not exchangeable.

We notice that the assumption of absolute continuity allows us to explain, in
a simpler way, the main ideas concerning the bridge between the two
approaches. However the same ideas remain substantially valid also in more
general cases, which may be treated in terms of stochastic intensities of the
counting process $\{N_{t}^{A},t\geq0\}$, where $A\subset\{1,...,r\}$ and
$N_{t}^{A}:=\sum_{i\in A}\mathbf{1}_{\{T_{i}\leq t\}}$, and in terms of
concepts related to stochastic filtering.

More in details, the plan of this paper goes as follows.

In Section 2 we introduce some needed notation and then we review basic facts
about distributions of order statistics, about diagonal sections of copulas,
and about the relations tying these two families of objects. We also show in
details the equivalence among  different forms  under which one can represent
the condition that $T_{1},...,T_{r}$ are minimally stable. Some relevant
remarks are given and an example is presented concerning the construction of
random variables which are minimally stable but not exchangeable.

In Section 3 we will first recall, in general, the definition and some basic
aspects of the family of the multivariate conditional hazard rate functions. We will
then show the special features of the cases where the lifetimes $T_{1},...,T_{r}$ are exchangeable or minimally stable. In this frame, the relations
pointed out in Section~\ref{sec:Diagonal-sections} will emerge as a natural
tool to obtain, in Section~4, the relations existing among diagonal sections
of copulas, the distributions of order statistics, and a special subclass of
multivariate conditional hazard rates. See in particular
Propositions~\ref{Talamone2-min} and~\ref{Circeo-min}.

In order to demonstrate some special aspects of the results presented in the
Sections~\ref{sec:MCHR} and~\ref{sec:Relations}, Section~\ref{sec:special}
will be devoted to a detailed discussion of the remarkable cases  of
exchangeable models based on Archimedean copulas and on minimally stable time
homogeneous load-sharing models. Some more  general examples   will be presented in the Appendix.

Often, along the paper, the term \textit{lifetime} will be used as a short-hand
for "non-negative random variable".

\textbf{Notation:}\quad For any natural number $n$, we set
$[n]=:\{1,2,...,n\}$.

For any subset $J\subseteq[n]$, we denote by $|J|$ the cardinality of $J$, and
as usual we denote by $J^{c}$ the complementary set of $J$, i.e., the set of
indices $[n]\setminus J$. Furthermore, for any $k\leq|J|$ we denote by
\[
\Pi_{k}(J)=\big\{(j_{1},...,j_{k}): j_{\ell}\in J, \, \forall\,\ell=1,..,k, \,
j_{\ell}\neq j_{h}, \, \forall\ell\neq h \big\},
\]
the set of $k$-permutations of $J$. When $k=|J|$ we drop the index $k$ and
write simply $\Pi(J)$. The symbol
\[
(n)_{k}:= n(n-1)\cdots(n-(k-1))) = | \Pi_{k}([n])|
\]
denotes the number of $k$-permutations in $\Pi_{k}([n])$. \newline

For any subset $A=\{j_{1},...,j_{\ell}\}\subset[n]$ we denote by
$\mathbf{e}_{A}$ the vector whose $i$-th component equal to 1 if $i\in A$, and
is equal to 0, otherwise.

%%%%%%SECTION%%%%%%%%%%%%%%%%%%%%%%%%%%%%%%%%

\section{Diagonal sections and distributions of order statistics}\label{sec:Diagonal-sections}

Let $T_{1},...,T_{r}$ denote $r$ non-negative random variables, defined on a
same probability space $\left(  \Omega,\mathcal{F},\mathbb{P}\right)  $, with joint survival function
$$\overline{F}(t_{1},...,t_{r}):=\mathbb{P}\left(  T_{1}>t_{1},...,T_{r}>t_{r}\right),
$$
and survival copula $K:\left[  0,1\right]
^{r}\rightarrow\left[  0,1\right]  $.

We assume that $T_{1},...,T_{r}$ are no-ties, i.e.,
$\mathbb{P}(T_i=T_j)=0$, for $i\neq j$, and
denote by $T_{1:r},...,T_{r:r}$ the order statistics of $\left(
T_{1},...,T_{r}\right)  $ and by
\begin{equation}
\overline{G}_{1:r}(t):=\mathbb{P}\left(  T_{1:r}>t\right)  ,...,\overline
{G}_{r:r}(t):=\mathbb{P}\left(  T_{r:r}>t\right)  \label{SurvFunctnsOrdStat}
\end{equation}
the corresponding marginal survival functions.

We assume that $T_1,...,T_r$ are identically distributed, and denote by $\overline{G}$  their common one-dimensional
marginal survival function, so that
\[
\overline{G}(t):=\mathbb{P}\left(  T_{j}>t\right),\quad \text{for $j=1,...,r$, and for $t>0$.}
\]
For simplicity's sake we also assume the following condition
\begin{description}
\item[\textbf{(H)}] \qquad  $\overline{G}(t)$ is continuous, strictly positive, and strictly decreasing, $\forall t>0$.
\end{description}

Since $T_{1},...,T_{r}$ are non-negative, one has $\overline{G}(0)=1$.

\bigskip

Assume also for the moment that their joint survival function $\overline{F}(t_{1},...,t_{r})$
is exchangeable,
\[
\overline{F}(t_{1},...,t_{r})=K\left(  \overline{G}(t_{1}),...,\overline
{G}(t_{r})\right), \quad \text{for $t_{1},...,t_{r}>0$,}
\]
with $K$ permutation-invariant.

The symbol $\delta_{r}$ denotes the diagonal section of $K$, namely the
function $\delta_{r}:\left[  0,1\right]  \rightarrow\left[  0,1\right]$
defined as the trace of $K$ on the diagonal:
\begin{equation}
\delta_{r}(u):=K(u,...,u),\forall u\in\left[  0,1\right]  .
\label{SubDiagonalSections}
\end{equation}

More in general,
 being $K$ permutation-invariant,
 the functions $\delta_{\ell}:\left[
0,1\right]  \rightarrow\left[  0,1\right]  $ are defined as the traces of $K$
on the sub-diagonals of the different dimensions: for $\ell=2,...,r-1$,
\begin{equation}\label{def:delta-ell}
\delta_{\ell}(u):=K(\overbrace{u,...,u}^{\ell\text{ times}},\overbrace{1,...,1}^{r-\ell\text{
times}}),
\end{equation}

An almost obvious, but important, remark is that the sub-diagonal $\delta_{\ell}(u)$ coincides with the diagonal of the exchangeable copula which is
obtained as the $\ell$-dimensional margin of $K$. Therefore we will refer  to the functions $\delta_\ell(u)$ also as diagonal sections.
\\
It is clear that $\delta_{\ell}(u)$ is an increasing function and that
$\delta_{2}(u)\geq\delta_{3}(u)\geq...\geq\delta_{r}(u)$. Conditions, for a
function $\delta:\left[  0,1\right]  \rightarrow\left[  0,1\right]  $ to be
the diagonal section of a copula, are given, in particular, in   Jaworski (2009) \cite{Jaworski-2009}, and  Durante and Sempi \cite{Durante-Sempi-2016}.

As well-known, a direct relationship can be established between $\delta_{r}$
and the probability law of the minimal order statistics $T_{1:r}$, in fact one
immediately obtains, for $t>0$,
\begin{equation}
\overline{G}_{1:r}(t)=\mathbb{P}\left(  T_{1}>t,...,T_{r}>t\right)
=\delta_{r}(\overline{G}(t)). \label{Fregene}
\end{equation}
By taking into account  exchangeability of $T_1,...,T_r$
one can similarly write
\begin{align}\notag
\mathbb{P}\left(  T_{j_{1}}>t,...,T_{j_{d}}>t\right)  &=\mathbb{P}\left(
T_{1}>t,...,T_{d}>t\right)  ,
\\
=\overline{F}(t,...,t,0,...,0)&=K\left(  \overline{G}(t),...,\overline
{G}(t),1,...,1\right)  =\delta_{d}(\overline{G}(t)), \label{SantaSevera}
\end{align}
for $d=1,2,...,r-1$ and for any subset of indices $J=\{j_1,..,j_d\}\subset [r]$ of cardinality $d$.
Whence one can write
\begin{equation}
\overline{G}_{\ell:r}(t)=\sum_{h=r-\ell+1}^{r}\left(  -1\right)  ^{h-r-1+\ell}\binom
{r}{h}\binom{h-1}{r-\ell}\delta_{h}(\overline{G}(t)),\quad \ell=1,...,r.
\label{OrdStatEtDiagonals}
\end{equation}

In fact, by using~\eqref{SantaSevera}, the latter formula is readily obtained
from the formula expressing the survival functions of the order statistics of
exchangeable variables in terms of the survival functions of the minima within
subsets of the same variables (see in particular David, Nagaraja (2003), p.~46 \cite{David-Nagaraja-2003}, Jaworski and Rychlik (2008)~\cite{Jaworski-Rychlik-2008},\ Rychlik (2010)~\cite{Rychlik-2010}).

 We emphasize at this point that formula~\eqref{OrdStatEtDiagonals}
  for $\overline{G}_{\ell:r}(t)$ is still valid when
the joint distribution of $T_{1},...,T_{r}$  satisfy the specific symmetry conditions recalled in Definitions 1 and 2, below. Such conditions are actually weaker than exchangeability,   and  turn out to be equivalent  each other (see
Proposition~\ref{prop:equivalent-properties}  below).

\begin{definition}\label{def:t-EX}
 We will say that the random variables $T_1,...,T_r$ are \emph{$t$-Exchangeable} if for every $t\geq 0$, the binary random variables $X_i(t)=\mathbf{1}_{\{T_i>t\}}$, $i=1,...,r$, are exchangeable, or equivalently the events $\{T_i>t\}$, $i=1,...,r$, are exchangeable.
 \end{definition}
 We will briefly refer to the previous property as $t$-EX.
		\\

\begin{definition}\label{def:min-stable}
The random variables $T_1,...,T_r$  are said \emph{minimally stable}, when, for any $\ell=1,...,r$ and for any subset $A=\{j_1,...,j_\ell\}\subseteq [r]$
\begin{align}\label{eq:min-stable}
&\mathbb{P}\left(  T_{j_{1}}>t,...,T_{j_{\ell}}>t\right)  =\mathbb{P}\left(
T_{1}>t,...,T_{\ell}>t\right), \quad \forall \,t>0,
\end{align}
namely
$\mathbb{P}\left(  T_{j_{1}}>t,...,T_{j_{\ell}}>t\right)  =\overline{F}(\underbrace{t,...,t}_{\ell\ \text{times}},\underbrace{0,...,0}_{r-\ell\ \text{times}}),  \, \forall \,t>0.$\\
\end{definition}

\noindent Finally we recall the strictly related concept of  diagonal dependent copulas (see Navarro, Fer\-nan\-dez-San\-chez (2009) \cite{Navarro-Fernandez-Sanchez-2020}). This concept is related to $k$-diagonal dependence, for $k\leq r$, introduced in Okolewski (2017)\cite{Okolewski-2017}.
To this purpose for a $r$-dimensional copula $C$, and  for any $A\subset [r]$ we set
\begin{equation*}
\delta_A^C(u):=C(u\mathbf{e}_A+ \mathbf{e}_{A^c}).
\end{equation*}
Note that $\delta_A^C(u)$ is the diagonal section of the marginal copula, corresponding to the $A$-components.  When $A=[\ell]$ we set
\begin{equation*}
\delta_\ell^C(u):=\delta_{[\ell]}^C(u)= C(\overbrace{u,...,u}^{\ell\text{ times}},\overbrace{1,...,1}^{r-\ell\text{
times}})
\end{equation*}
\begin{definition}\label{def:k-DD-and-DD} Let $C$ be an $r$-dimensional copula $C$.
The copula $C$
  is
said to be a $k$-\emph{diagonal dependent} copula, with $k\leq r$, if for any $\ell\leq k$ and
 for any subset $A=\{j_1,...,j_\ell\}\subset [r]$
\begin{equation}\label{def:ell-DD}
\delta_{A}^C(u)=\delta_\ell^C(u).
\end{equation}
When $k=r$, the copula $C$ is said  \emph{diagonal dependent}.
\end{definition}

 As in Navarro and Fernandez-Sanchez (2020) \cite{Navarro-Fernandez-Sanchez-2020} we briefly refer to  the property of diagonal dependence as DD.

 The following result can be obtained by taking into account basic
and well known properties of exchangeable binary random variables originally obtained by de Finetti (see \cite{deFinetti-1937}). See also Navarro et al. (2021) \cite{Navarro-Rychlik-Spizzichino-2021}.

\begin{proposition}\label{prop:equivalent-properties}
The following properties are equivalent

\noindent\textbf{(i)}\;
The  random variables $T_1,...,T_r$,  are $t$-Exchangeable;

\noindent\textbf{(ii)} For all  $H,H^\prime \subseteq\{1,2,...,r\}$, with $|H|=|H^\prime|$
\begin{equation}\label{eq:cond-H-H'-uguali}
\mathbb{P}\big(T_j>t,  \forall j\in H,  T_i\leq t,  \forall i \notin H  \big) =  \mathbb{P}\big(T_j>t,  \forall j\in H^\prime,  T_i\leq t,  \forall i \notin H^\prime  \big).
\end{equation}

\noindent\textbf{(iii)} The random variable $T_1,...,T_r$ are minimally stable;

\noindent\textbf{(iv)} The random variable $T_1,...,T_r$ are identically distributed and their copula $K$ is diagonal dependent.

\end{proposition}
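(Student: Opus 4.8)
The plan is to establish the chain \textbf{(i)}$\Leftrightarrow$\textbf{(ii)}$\Leftrightarrow$\textbf{(iii)}$\Leftrightarrow$\textbf{(iv)}, working throughout at a fixed level $t>0$ with the binary vector $(X_1(t),\dots,X_r(t))$, where $X_i(t)=\mathbf{1}_{\{T_i>t\}}$. For a subset $H\subseteq[r]$ it is convenient to single out the atom probability $p_H(t):=\mathbb{P}\big(T_j>t,\ \forall j\in H,\ T_i\leq t,\ \forall i\notin H\big)$ and the survival probability $q_A(t):=\mathbb{P}\big(T_j>t,\ \forall j\in A\big)$, so that \textbf{(ii)} reads ``$p_H(t)$ depends on $H$ only through $|H|$'' and \textbf{(iii)} reads ``$q_A(t)$ depends on $A$ only through $|A|$''. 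The equivalence \textbf{(i)}$\Leftrightarrow$\textbf{(ii)} I would obtain by unwinding definitions: by Definition~\ref{def:t-EX}, $t$-exchangeability means that for each fixed $t$ the law of $(X_1(t),\dots,X_r(t))$ is invariant under index permutations; as these variables are binary, a configuration is determined by its survivor set $H$ and has probability $p_H(t)$, so permutation invariance is exactly the assertion that $p_H(t)=p_{H'}(t)$ whenever $|H|=|H'|$, i.e.\ \eqref{eq:cond-H-H'-uguali}. This is the elementary finite instance of de Finetti's description of exchangeable binary families \cite{deFinetti-1937}.

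For \textbf{(ii)}$\Leftrightarrow$\textbf{(iii)} I would use the inclusion--exclusion correspondence between the two families. Since $\{T_j>t,\ \forall j\in A\}$ is the disjoint union over all survivor sets $B\supseteq A$ of the atoms indexed by $B$,
\[
q_A(t)=\sum_{B\supseteq A}p_B(t),\qquad\text{and dually}\qquad p_H(t)=\sum_{B\supseteq H}(-1)^{|B|-|H|}\,q_B(t),
\]
the second identity being the M\"obius inversion of the first on the Boolean lattice. If \textbf{(ii)} holds and $p_B(t)=:\pi_m(t)$ for $|B|=m$, grouping the first sum by $m=|B|$ gives $q_A(t)=\sum_{m\geq|A|}\binom{r-|A|}{m-|A|}\pi_m(t)$, which depends only on $|A|$, i.e.\ \textbf{(iii)}. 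Conversely, if \textbf{(iii)} holds and $q_B(t)=:\sigma_m(t)$ for $|B|=m$, the same grouping applied to the inversion formula yields $p_H(t)=\sum_{m\geq|H|}(-1)^{m-|H|}\binom{r-|H|}{m-|H|}\sigma_m(t)$, which depends only on $|H|$, i.e.\ \textbf{(ii)}. I expect this backward passage to be the step demanding the most care, since recovering equality of the atom probabilities from equality of the upper survival probabilities is the genuinely non-obvious direction, whereas the forward summation is routine.

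Finally, for \textbf{(iii)}$\Leftrightarrow$\textbf{(iv)} I would translate into copula language. Taking $\ell=1$ in \eqref{eq:min-stable} already forces $T_1,\dots,T_r$ to be identically distributed, so that clause of \textbf{(iv)} is automatic under \textbf{(iii)} (and is in any case a standing assumption). Using $\overline{G}(0)=1$ and the survival copula representation $\overline{F}(t_1,\dots,t_r)=K(\overline{G}(t_1),\dots,\overline{G}(t_r))$, for $A=\{j_1,\dots,j_\ell\}$ one computes $q_A(t)=\overline{F}(t\mathbf{e}_A)=K(\overline{G}(t)\mathbf{e}_A+\mathbf{e}_{A^c})=\delta_A^K(\overline{G}(t))$, and likewise $q_{[\ell]}(t)=\delta_\ell^K(\overline{G}(t))$. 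Hence \textbf{(iii)} is precisely the statement $\delta_A^K(\overline{G}(t))=\delta_\ell^K(\overline{G}(t))$ for all $t>0$. Under hypothesis \textbf{(H)} the function $\overline{G}$ is continuous and strictly decreasing, so its range over $t>0$ is a nondegenerate subinterval of $[0,1]$; since the diagonal sections $\delta_A^K$ and $\delta_\ell^K$ are continuous on $[0,1]$, agreement on this interval is equivalent to agreement on all of $[0,1]$, which is exactly condition \eqref{def:ell-DD} defining diagonal dependence. This yields \textbf{(iii)}$\Leftrightarrow$\textbf{(iv)} and closes the chain of equivalences.
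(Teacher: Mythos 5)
Your proof is correct and follows essentially the same route as the paper's: identifying \textbf{(i)} and \textbf{(ii)} through the binary atoms $X_i(t)$, passing between atom probabilities and survival probabilities by inclusion--exclusion (which you spell out as M\"obius inversion, where the paper only invokes it), and translating \textbf{(iii)} into \textbf{(iv)} via the copula representation and the invertibility of $\overline{G}$ under \textbf{(H)}. The only difference is the ordering of the chain (the paper links \textbf{(ii)} to \textbf{(iv)} rather than to \textbf{(iii)}), which is immaterial.
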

\begin{proof}
Properties \emph{\textbf{(i)}} and \emph{\textbf{(ii)}} are clearly equivalent: indeed
$$
\mathbb{P}\big(T_j>t,  \forall j\in H,  T_i\leq t,  \forall i \notin H  \big) =\mathbb{P}\big(X_i(t)=1, \forall j\in H,  X_i(t)=0,  \forall i \notin H  \big).
$$
Similarly properties \emph{\textbf{(iii)}} and \emph{\textbf{(iv)}} are equivalent:
indeed if $T_1,...,T_r$ are minimally stable, then by taking $\ell=1$ in~\eqref{eq:min-stable}, they are identically distributed, and therefore   for all $A\subset [r]$ with $|A|=\ell$
$$
\mathbb{P}(T_i>t, \, \forall i\in A)=\mathbb{P}( K(\overline{G}(t)\mathbf{e}_A+ \mathbf{e}_{A^c})=\delta_\ell(\overline{G}(t)), \quad \forall \, t\geq 0,
$$
and $\overline{G}(t)$ is invertible, in view of  the regularity condition \textbf{(H)}.
\\
Finally \emph{\textbf{(iv)}} is equivalent to \emph{\textbf{(ii)}}, in view of the inclusion-exclusion formula.
\end{proof}
	\begin{remark}\label{rem:letteratura}
The previous result (Proposition~\ref{prop:equivalent-properties})  holds true also without the assumption~$\emph{\textbf{(H)}}$,
but in the general case  an extension   of the notion of Diagonal Dependence is needed.
 (see Navarro et al.\@ (2021) \cite{Navarro-Rychlik-Spizzichino-2021}).
 	\end{remark}
 The interest for the properties \emph{\textbf{(i)}} and \emph{\textbf{(iv)}} had  independently emerged  in the two papers Marichal et al.~(2011)~\cite{Marichal-et-al-2011} and Navarro and Fernandez-Sanchez (2020)~\cite{Navarro-Fernandez-Sanchez-2020} with reference to the field of systems' reliability. Still in the same framework, furthermore, the study of conditions for the equivalence between \emph{\textbf{(i)}} and \emph{\textbf{(iv)}} has been developed in Navarro et al.~(2021)~\cite{Navarro-Rychlik-Spizzichino-2021}.
    More precisely the above mentioned articles deal with the so-called signature representation for the survival function $R_{S}(t)$ of the lifetime $T_{S}$ of a binary coherent system $S$. For a coherent system made with $r$ binary components, the signature is a probability distribution $\mathbf{s}:=(s_1,...,s_{r})$ over $[r]$ which is a combinatorial invariant associated to the structure function $\varphi$ of $S$ (see in particular Samaniego~(2007)~\cite{Samaniego-2007}). The signature representation means that the equation
\begin{equation}\label{R-S}
R_{S}^{(\varphi)}(t)=\sum_{h=1}^rs_{h}\overline{G}_{h:r}(t)
\end{equation}
holds for any $t>0$, where $\overline{G}_{1:r}$,...,$\overline{G}_{r:r}$ denote the survival functions of the order statistics of the components' lifetimes. See also the Remark~\ref{rem:reliability}  below.

We are now in a position to establish the following result.

\begin{proposition}\label{prop:min-stable-Ord-Stat}The equations~\eqref{OrdStatEtDiagonals} hold, under any of the conditions \textbf{(i)}\---\textbf{(iv)}.
\end{proposition}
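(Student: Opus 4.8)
The plan is to observe that the derivation of~\eqref{OrdStatEtDiagonals} sketched above never uses full exchangeability, but only the identity~\eqref{SantaSevera}, and that~\eqref{SantaSevera} is itself one of the equivalent forms of minimal stability. Since Proposition~\ref{prop:equivalent-properties} guarantees that the properties \textbf{(i)}\---\textbf{(iv)} are equivalent, I would fix any one of them, most conveniently minimal stability \textbf{(iii)}, and re-run the argument with exchangeability replaced by~\eqref{SantaSevera}.

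First I would record that, under \textbf{(iii)}, the equality~\eqref{SantaSevera} holds for every $d$ and every subset $J=\{j_1,\dots,j_d\}\subseteq[r]$ of cardinality $d$. This is exactly the computation already carried out inside the proof of Proposition~\ref{prop:equivalent-properties}: minimal stability says that $\mathbb{P}(T_{j_1}>t,\dots,T_{j_d}>t)$ depends on $J$ only through $|J|=d$, and identical margins together with the invertibility granted by~\textbf{(H)} identify this common value with $\delta_d(\overline{G}(t))$. In particular the survival function of the minimum over a subset is constant across subsets of equal size.

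Next I would invoke the purely combinatorial identity expressing the survival functions of the order statistics through the subset-minima. Writing $N(t):=|\{i\in[r]:T_i>t\}|$, one has $\{T_{\ell:r}>t\}=\{N(t)\geq r-\ell+1\}$, while the classical inclusion-exclusion formula (the one cited from David and Nagaraja, Jaworski and Rychlik, and Rychlik) gives, for every $m$,
\[
\mathbb{P}(N(t)\geq m)=\sum_{d=m}^{r}(-1)^{d-m}\binom{d-1}{m-1}B_d(t),\qquad B_d(t):=\sum_{|J|=d}\mathbb{P}\Big(\bigcap_{j\in J}\{T_j>t\}\Big).
\]
The key point, and the only thing that really needs checking, is that this identity rests solely on the no-ties assumption and on elementary inclusion-exclusion; it presupposes no symmetry among the $T_i$, and hence is available verbatim in the minimally stable setting.

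Finally I would substitute. By the first step each of the $\binom{r}{d}$ summands defining $B_d(t)$ equals $\delta_d(\overline{G}(t))$, so $B_d(t)=\binom{r}{d}\delta_d(\overline{G}(t))$; plugging this and $m=r-\ell+1$ into the displayed formula and relabelling $d$ as $h$ collapses the sum over subsets into the binomial count and produces exactly~\eqref{OrdStatEtDiagonals}, since the sign exponent $d-m=h-(r-\ell+1)=h-r-1+\ell$ matches. I do not anticipate a genuine obstacle here: conceptually the whole content is the realisation that exchangeability was overkill, its only role in the original derivation being to force the cardinality-dependence of the subset minima that \textbf{(iii)} already supplies; the remaining work is the bookkeeping of binomial coefficients, which is routine.
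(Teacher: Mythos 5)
Your proposal is correct and follows exactly the paper's own (very brief) argument: the paper likewise observes that the derivation of~\eqref{OrdStatEtDiagonals} rests only on~\eqref{SantaSevera}, which in turn requires just the DD property and identical margins, i.e.\ any of \textbf{(i)}\---\textbf{(iv)}. Your write-up simply makes explicit the symmetry-free inclusion-exclusion identity for $\mathbb{P}(N(t)\geq m)$ that the paper delegates to the cited references, and the binomial bookkeeping checks out.
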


Actually the validity of~\eqref{OrdStatEtDiagonals} hinges on the Eq.~\eqref{SantaSevera}, which only requires the DD property  of the survival copula and the identical distribution of $T_i$, $i=1,...,r$.

From now on we assume that the random variables $T_1,...,T_r$ are minimally stable.

Since the marginal survival functions $\overline{G}_{1:r},...,\overline{G}_{r:r}$ are determined by the knowledge of the joint distribution of
$T_{1},...,T_{r}$ then, in principle, they should depend on the survival
copula $K$. As shown by the formula~\eqref{OrdStatEtDiagonals},
however, $\overline{G}_{1:r},...,\overline{G}_{r:r}$ are actually
determined by the only knowledge of the diagonal sections $\delta_{2},...,\delta_{r}$ and the common survival function $\overline{G}$.
\\

On the other hand, when
 $\overline{G}_{1:r},...,\overline{G}_{r:r}$ are known,
we can easily recover the common marginal survival function $\overline{G}(t)$ and  the functions $\delta_{2},...,\delta_{r}$. Indeed,
 the random variables $T_1$,...,$T_r$  are identically distributed  and therefore
\begin{align}\label{bar-G-con-bar-G-k-r}
\overline{G}(t)= \frac{1}{r} \sum_{k=1}^r \overline{G}_{k:r}(t),
\end{align}
as immediately follows by observing that $\sum_{h=1}^r \mathbf{1}_{\{T_h>t\}}= \sum_{k=1}^r \mathbf{1}_{\{T_{k:r}>t\}}$.
Furthermore the same
 formula~\eqref{OrdStatEtDiagonals} permits to
recover, step-by-step, the functions $\delta_{2},...,\delta_{r}$.  A detailed formula is given in the Corollary~\ref{cor:delta-da-G-k} of the following result.

\begin{proposition}\label{prop:delta-da-G-k}
Let $T_1,T_2,...,T_r$ be minimally stable, with order statistics distributed according to   $\overline{G}_{1:r},\,\overline{G}_{2:r},...,\,\overline{G}_{r:r}$.
\\
  Then, for every $d\in\{1,2,...,r\}$, and $J\subseteq \{1,2,..,r\}$, with $|J|=d$
\begin{align}\label{prob-surv-min-on-J-a}
\mathbb{P}\big(T_j>t, \, \forall j\in J  \big)
&= \sum_{h=d}^{r}\frac{(h)_d}{(r)_d}\,  \Big(\overline{G}_{r-h+1:r}(t)- \overline{G}_{r-h:r}(t)\Big)
\\&
=\frac{d}{(r)_d}\,\sum_{k=1}^{r-d+1} (r-k)_{d-1}\,\overline{G}_{k:r}(t), \quad t>0, \label{prob-surv-min-on-J}
\end{align}
where    by convention  $\overline{G}_{0:r}(t)=0$, for $t> 0$, and $\binom{0}{0}=1$.
\end{proposition}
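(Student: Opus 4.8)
I would prove Proposition~\ref{prop:delta-da-G-k} by recognizing that the quantity $\mathbb{P}(T_j>t,\ \forall j\in J)$ is, by minimal stability (condition \textbf{(iii)}), independent of which $d$-subset $J\subset[r]$ is chosen, and equals $\delta_d(\overline{G}(t))$ via~\eqref{SantaSevera}. The goal is then to \emph{invert} the relation~\eqref{OrdStatEtDiagonals}, which expresses the order-statistic survival functions $\overline{G}_{\ell:r}$ as linear combinations of the $\delta_h(\overline{G}(t))$, so as to recover the $\delta_d$'s (equivalently the subset-minima probabilities) as linear combinations of the $\overline{G}_{k:r}$.

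**Key steps.**
First I would fix $t>0$ and introduce the partition probabilities $p_j(t):=\mathbb{P}(\text{exactly }j\text{ of the }T_i\text{ exceed }t)$, for $j=0,1,\dots,r$; by $t$-exchangeability these depend only on the number $j$. The two natural ``coordinate systems'' are then the subset-minima probabilities $\delta_d(\overline{G}(t))=\mathbb{P}(T_1>t,\dots,T_d>t)$ and the tail probabilities of the counting variable $N_t:=\sum_i \mathbf{1}_{\{T_i>t\}}$, whose relation to the order statistics is simply $\{T_{k:r}>t\}=\{N_t\ge r-k+1\}$, giving $\overline{G}_{k:r}(t)=\mathbb{P}(N_t\ge r-k+1)$. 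The combinatorial bridge is the identity
\[
\mathbb{P}(T_{j_1}>t,\dots,T_{j_d}>t)=\sum_{h=d}^{r}\frac{(h)_d}{(r)_d}\,\mathbb{P}(N_t=h),
\]
which holds because, conditionally on exactly $h$ of the variables exceeding $t$, exchangeability makes every $d$-subset equally likely to lie among those $h$, and the probability that a fixed $d$-subset does so is the ratio of ordered choices $(h)_d/(r)_d$. Writing $\mathbb{P}(N_t=h)=\mathbb{P}(N_t\ge h)-\mathbb{P}(N_t\ge h+1)=\overline{G}_{r-h+1:r}(t)-\overline{G}_{r-h:r}(t)$ yields exactly the first equality~\eqref{prob-surv-min-on-J-a} (with the convention $\overline{G}_{0:r}=0$ absorbing the boundary term at $h=r$). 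For the second equality~\eqref{prob-surv-min-on-J} I would perform an Abel summation (summation by parts) on the telescoping differences: after re-indexing by $k=r-h+1$, the coefficient of $\overline{G}_{k:r}(t)$ collects contributions from consecutive $h$'s, and the resulting factor simplifies to $\frac{d}{(r)_d}(r-k)_{d-1}$ using the elementary identity $(h)_d-(h-1)_d=d\,(h-1)_{d-1}$.

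**Main obstacle.**
The conceptual content is light—it is essentially de~Finetti-style conditioning plus inclusion–exclusion—so the real work is purely in the bookkeeping of the two rising/falling-factorial identities, namely establishing the hypergeometric weight $(h)_d/(r)_d$ cleanly and then carrying out the Abel rearrangement so that the telescoped form collapses to the single-sum form with coefficient $d\,(r-k)_{d-1}/(r)_d$. I expect the summation-by-parts step to be the only place where sign errors and off-by-one shifts in the factorial indices can creep in; verifying the boundary contributions (the $h=r$ term and the convention $\overline{G}_{0:r}=0$) and checking the small cases $d=1$ (which must reduce to~\eqref{bar-G-con-bar-G-k-r}, giving uniform weight $1/r$) and $d=r$ (which must return $\delta_r(\overline{G}(t))=\overline{G}_{1:r}(t)$) provides the needed sanity checks.
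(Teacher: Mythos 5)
Your proposal is correct and follows essentially the same route as the paper's own proof: conditioning on the count $N_t$ of survivors, the $t$-exchangeability of the events $\{T_i>t\}$ (equivalent to minimal stability) yielding the hypergeometric weight $(h)_d/(r)_d=\binom{r-d}{h-d}/\binom{r}{h}$, the identification $\mathbb{P}(N_t=h)=\overline{G}_{r-h+1:r}(t)-\overline{G}_{r-h:r}(t)$, and an Abel summation via $(h)_d-(h-1)_d=d\,(h-1)_{d-1}$ for the second equality. No gaps.
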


Before giving the proof we get the expression of~$\delta_d$ in terms of  the marginal survival functions $\overline{G}_{k:r}(t)$, $k=1,...,r$. Indeed, this is a consequence of equations~\eqref{prob-surv-min-on-J} and~\eqref{SantaSevera} (as already observed the condition appearing in the latter equation also holds for minimally stable variables).

\begin{corollary}\label{cor:delta-da-G-k}
Assume   the same conditions  of Proposition~\ref{prop:delta-da-G-k} and
 condition~\textbf{\emph{(H)}}. Then
   for every $d\in\{1,2,...,r\}$, the following equalities hold
\begin{align}\label{delta-from-G-0}
\delta_d(u)&
=\sum_{h=d}^{r}  \frac{(h)_d}{(r)_d}\,\Big(\overline{G}_{r-h+1:r}\big(\overline{G}^{-1}(u)\big)- \overline{G}_{r-h:r}\big(\overline{G}^{-1}(u)\big)\Big)
\\&=\frac{d}{(r)_d}\,\sum_{h=1}^{r-d+1} (r-h)_{d-1}\,\overline{G}_{h:r}\big(\overline{G}^{-1}(u)\big), \quad u\in  [0,1].
\label{delta-from-G}
\end{align}

\end{corollary}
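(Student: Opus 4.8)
The plan is to read off the corollary directly from Proposition~\ref{prop:delta-da-G-k} by a single change of variable, since all the combinatorial content has already been established there. The bridge between the two is the identity~\eqref{SantaSevera}, which, as observed in the text, continues to hold for minimally stable variables (it requires only the DD property of $K$ together with identical distribution): for any $J=\{j_1,\dots,j_d\}$ with $|J|=d$,
$$\mathbb{P}\big(T_{j_1}>t,\dots,T_{j_d}>t\big)=\delta_d(\overline{G}(t)),\qquad t>0.$$

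First I would equate this with the two expressions~\eqref{prob-surv-min-on-J-a} and~\eqref{prob-surv-min-on-J} furnished by Proposition~\ref{prop:delta-da-G-k}, obtaining for every $t>0$
$$\delta_d(\overline{G}(t))=\sum_{h=d}^{r}\frac{(h)_d}{(r)_d}\Big(\overline{G}_{r-h+1:r}(t)-\overline{G}_{r-h:r}(t)\Big)=\frac{d}{(r)_d}\sum_{h=1}^{r-d+1}(r-h)_{d-1}\,\overline{G}_{h:r}(t).$$
Then I would invert $\overline{G}$. Under condition~\textbf{(H)} the function $\overline{G}$ is continuous, strictly positive and strictly decreasing on $(0,\infty)$ with $\overline{G}(0)=1$, hence it is a continuous decreasing bijection of $(0,\infty)$ onto its range $(0,1)$ and possesses an inverse $\overline{G}^{-1}$. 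For each $u\in(0,1)$ I would set $t=\overline{G}^{-1}(u)$, so that $\overline{G}(t)=u$, and substitute; this turns the previous display into precisely~\eqref{delta-from-G-0} and~\eqref{delta-from-G}.

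The only point demanding attention — and it is a minor one, since the hard combinatorics is already contained in Proposition~\ref{prop:delta-da-G-k} — is the behaviour at the endpoints $u\in\{0,1\}$, which lie outside the range $(0,1)$ swept out by $\overline{G}$ on $(0,\infty)$. These I would dispatch by continuity: $\delta_d$ is continuous on $[0,1]$ (being a section of a copula), and letting $t\to\infty$ (so $u\to0^{+}$) gives $\delta_d(0)=0$, matching the vanishing of every $\overline{G}_{k:r}(t)$, while letting $t\to0^{+}$ (so $u\to1^{-}$, using continuity of $\overline{G}$ at $0$) gives $\delta_d(1)=1$. A direct check that the right-hand side of~\eqref{delta-from-G} also equals $1$ at $u=1$ reduces, since $\overline{G}_{h:r}(0)=1$ for all $h$, to the elementary identity $\frac{d}{(r)_d}\sum_{h=1}^{r-d+1}(r-h)_{d-1}=1$, equivalently $\sum_{j=d-1}^{r-1}(j)_{d-1}=(r)_d/d$, which is the hockey-stick identity for falling factorials. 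This closes the argument.
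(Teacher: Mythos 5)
Your proposal is correct and follows the same route as the paper: the paper derives the Corollary exactly by combining Proposition~\ref{prop:delta-da-G-k} with the identity~\eqref{SantaSevera} (valid for minimally stable variables) and inverting $\overline{G}$ under condition \textbf{(H)}. Your additional verification at the endpoints $u\in\{0,1\}$, including the falling-factorial hockey-stick identity at $u=1$, is a small but sound supplement that the paper leaves implicit.
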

\begin{remark}\label{rem:equiv-inform}
Taking into account  also~\eqref{bar-G-con-bar-G-k-r},
we can thus conclude that the family of functions $\big\{ \overline{G}_{1:r},...,\overline
{G}_{r:r}\big\} $ and the family $\big\{ \overline{G},\delta_{1},...,\delta_{r}\big\}$ convey the same piece of information.
\\
\end{remark}
\begin{remark}\label{rem:reliability}
    From the equivalence between signature representation and properties \textbf{(i)} and \textbf{(iv)} we can conclude that,  under our conditions, the same information contained in $\big\{\overline{G}_{1:r},...,\overline{G}_{r:r}\big\}$ is also contained in the family of all the reliability functions
    $R_S^{(\varphi)}(t)$ in \eqref{R-S}. We emphasize that such a family is indexed by all the coherent structures $\varphi$ for binary systems made of $r$ components.
\end{remark}

\bigskip

As far as the proof of Proposition~\ref{prop:delta-da-G-k}  is concerned, one could apply  well-known and simple results (see, e.g., de Finetti (1937) \cite{deFinetti-1937}) about exchangeable events.

The above  formulas~\eqref{prob-surv-min-on-J-a}\--- \eqref{delta-from-G} in Proposition~\ref{prop:delta-da-G-k} and  Corollary~\ref{cor:delta-da-G-k}, may also be obtained in terms of the M\"{o}bius transform starting from  formula~\eqref{OrdStatEtDiagonals}.

 For the ease of the reader we give however a self-contained, and detailed, proof. One important ingredient of the proof is  the observation that, for any subset $J\subset [r]$ one has
\begin{align}\label{eq:relazione-tra-min_suJ-e-cond-ii}
\mathbb{P}\big(T_j>t, \, \forall j\in J  \big)
&= \sum_{K:\,K\subseteq J^c}
\mathbb{P}\big(T_j>t, \, \forall j\in J \cup K, \; T_i\leq t, \, \forall i \notin J\cup K  \big)
\intertext{or equivalently}
\mathbb{P}\big(T_j>t, \, \forall j\in J  \big)&=  \sum_{H:\,H\supseteq J} \label{eq:relazione-tra-min_suJ-e-cond-ii-BIS}
\mathbb{P}\big(T_j>t, \, \forall j\in H, \; T_i\leq t, \, \forall i \notin H  \big).
\end{align}

\begin{proof}[Proof of Proposition~\ref{prop:delta-da-G-k}]
We start by observing that
\begin{align}\notag
&\mathbb{P}(N(t)=r-h)=\sum_{J: |J|=h} \mathbb{P}(T_j>t, \, \forall j\in J, \; T_i\leq t,\, \forall\, i\notin J)
\intertext{so that, thanks to Eq.~\eqref{eq:cond-H-H'-uguali}}
\notag
&\mathbb{P}(N(t)=r-h)=\binom{r}{h}\mathbb{P}(T_j>t, \, \forall j\in\{1,2,..,h\}, \;T_i\leq t,\; \forall i\in\{h+1,...,r\}),
\intertext{or equivalently, for any $H\subset [r]$, with $|H|=h$}
\label{eq:a}
&\mathbb{P}\big(T_j>t, \, \forall j\in H, \; T_i\leq t, \, \forall i \notin H  \big)= \frac{1}{\binom{r}{h}}\, \mathbb{P}(N(t)=r-h).
\intertext{On the other hand, we observe that}
&\notag
\mathbb{P}(N(t)=r-h)=
\mathbb{P}(T_{r-h:r}\leq t< T_{r-h+1:r})
\\
=& \mathbb{P}(T_{r-h+1:r}>t)-\mathbb{P}(T_{r-h:r}>t)
=\overline{G}_{r-h+1:r}(t)-\overline{G}_{r-h:r}(t).
\label{eq:b}
\end{align}
Then the thesis follows immediately: indeed, for every $J\subset \{1,2,...,r\}$ with~$|J|=d$,  Eq.s~\eqref{eq:a} and~\eqref{eq:b}, together with~\eqref{eq:relazione-tra-min_suJ-e-cond-ii},  imply
\begin{align*}
\mathbb{P}\big(T_j>t, \, \forall j\in J  \big)
&= \sum_{h=d}^{r} \binom{r-d}{h-d}\, \frac{1}{\binom{r}{h}}\,\left( \overline{G}_{r-h+1:r}(t)-\overline{G}_{r-h:r}(t)\right), 
\end{align*}
 and  formula~\eqref{prob-surv-min-on-J-a} follows by observing that
$$\frac{\binom{r-d}{h-d}}{\binom{r}{h}}=\frac {(h)_d}{(r)_d}.
$$
Finally, from~\eqref{prob-surv-min-on-J-a},  taking into account the convention that $\overline{G}_{0:r}(t)=0$,  one obtains
\begin{align*}
(r)_d\, \mathbb{P}\big(T_j>t, \, \forall j\in J  \big)
&=\sum_{h=d}^{r}  (h)_d \overline{G}_{r-(h-1):r}(t)- \sum_{h=d}^{r-1}  (h)_d \overline{G}_{r-h:r}(t).
\\
&=\sum_{k=d-1}^{r-1}  (k+1)_d \overline{G}_{r-k:r}(t)- \sum_{h=d}^{r-1}  (h)_d \overline{G}_{r-h:r}(t).
\\
&=
 d!\, \overline{G}_{r-(d-1):r}(t) +
\sum_{k=d}^{r-1}  \left[(k+1)_d- (k)_d\right] \overline{G}_{r-k:r}(t).
\end{align*}
Therefore, by observing that
$$
(k+1)_d- (k)_d= (k)_{d-1}\,[k+1- (k-(d-1))]=(k)_{d-1} \, d,
$$ one gets
\begin{align*}
 \mathbb{P}\big(T_j>t, \, \forall j\in J  \big)=\frac{d}{(r)_d}\, \sum_{k=d-1}^{r-1}  (k)_{d-1} \overline{G}_{r-k:r}(t),
\end{align*}
Then formula~\eqref{prob-surv-min-on-J} follows  by setting $h=r-k$ in the last sum.
\end{proof}
\bigskip

\begin{remark}\label{rem:EX-con-delta=min-stable}
For non-exchangeable, but minimally  stable
lifetimes $T_1,...,T_r$ there still exist   exchangeable lifetimes  $\widetilde{T}_1,...,\widetilde{T}_r$,   such that
 $\mathbb{P}(\widetilde{T}_j>t)=\mathbb{P}(T_j>t)=\overline{G}(t)$, and the
 sub-diagonal sections $\widetilde{\delta}_\ell(u)$ of their survival   copula $\widetilde{K}$ coincide  with the sub-diagonal $\delta_\ell(u)$  of the  survival   copula $K$. Indeed   $\widetilde{K}$ may be contructed by symmetrizing~$K$:
$$
\widetilde{K}(u_1,...,u_r)=\frac{1}{r!} \sum_{\sigma\in \Pi([r])} K(u_{\sigma_1},...,u_{\sigma_r}).
$$
 The above construction  can be of help in obtaining the explicit form of $\mathbb{P}(T_j>t, \, \forall\,j\in A)$ in some special cases (see in particular Section~\ref{subsec:THLS}).
  \end{remark}

\begin{remark}\label{rem:min-stable-NOT-exchangeable}
 The problem of constructing examples of  vectors which
are not exchangeable, but still minimally stable, naturally arises. Since minimally stable variables $T_1,...,T_r$ are
identically distributed, constructing such examples is equivalent to constructing  dia\-gonal-de\-pendent  copulas, which are not exchangeable.   In the following Example~\ref{example:C-3-construction} we present a  simple path to such a construction. Other
 examples 
 can be found in Navarro, Fernandez-Sanchez (2009) \cite{Navarro-Fernandez-Sanchez-2020}, and Navarro et al.~(2020) \cite{Navarro-Rychlik-Spizzichino-2021}. See also Example~\ref{example:D+C1-C2} in the Appendix.
\end{remark}

\begin{example}\label{example:C-3-construction}
First of all we notice that, when $r=2$, then any pair of identical distributed random variables $T_1$ and $T_2$ are minimally stable, but in general is not exchangeable. Similarly, and trivially, any $2$-dimensional copula $C$ is minimally stable. Indeed $\delta^{C}_2(u)=C(u,u)$, and $\delta^{C}_1(u)=C(u,1)=C(1,u)=u$.
Starting from the copula $C$ one may define two $3$-dimensional copulas as follows
$$
C_{(1,2,3)}(u_1,u_2, u_3):= \frac{1}{3}\,\big[C(u_1, u_2)u_3+C(u_2, u_3)u_1+C(u_3, u_1) u_2\big]
$$
$$
C_{(3,2,1)}(u_1,u_2, u_3):= \frac{1}{3}\,\big[C(u_3, u_2)u_1+C(u_2, u_1)u_3+C(u_1, u_3)u_2\big]
$$
respectively obtained as the symmetric mixture over the cyclic permutations of $(1,2,3)$ and the cyclic permutations of $(3,2,1)$. Notice that when $C$ is non-exchangeable, then
$C_{(1,2,3)}$ and $C_{(3,2,1)}$ are non-exchangeable: indeed if $u,v\in (0,1)$ are such that $C(u,v)\neq C(v,u)$ then, for example,
\begin{align*}
C_{(1,2,3)}(u,v, 1)=&:= \frac{1}{3}\,\big[C(u, v)+C(v, 1)u+C(1, u)v\big]= \frac{1}{3}\,\big[C(u,v)+vu+uv)\big]
\intertext{which is clearly different from}
 C_{(1,2,3)}(v,u, 1)&:= \frac{1}{3}\,\big[C(v, u)+C(u, 1)v+C(1, v)u\big]=\frac{1}{3}\,C(v, u)+\frac{2}{3}\,uv,
\end{align*}
thus $C_{(1,2,3)}$ is non-exchangeable, though the $2$-dimensional marginal distributions are all equal, namely
$$
C_{(1,2,3)}(u,v,1)=C_{(1,2,3)}(v,1,u)=C_{(1,2,3)}(1,u,v).
$$
 By iterating the above construction,
one can also obtain a  DD,  non-exchangeable, copula which is $n$-dimensional. Details can be found in the Appendix
	(see Example~\ref{example-DD-n-copulas}).
\end{example}

%%%%%%SECTION%%%%%%%%%%%%%%%%%%%%%%%%%%%%%%%%
\section{Multivariate conditional hazard rates}\label{sec:MCHR}

In this section we restrict attention to vectors of lifetimes with  absolute continuous joint probability
distributions. In such a case, the probabilistic properties
of the latter distributions can be alternatively described in terms of the multivariate conditional hazard rate functions.

Before concentrating attention on  minimally stable  or exchangeable lifetimes $T_{1}$,...,$T_{r}$
of our concern, we briefly recall some definitions and basic properties of
multivariate conditional hazard rate functions, more in general. On this
purpose, consider $n$ non-negative random variables $V_{1},...,V_{n}$ with an
absolutely continuous joint distribution whose joint density function is
denoted by $f_{\mathbf{V}}$.

For $k=1,...,n-1$, and for any $k$-permutation $\mathbf{j}=(j_1,...,j_k)\in\Pi_k([n])$, the symbol~$\mathbf{V}_{\mathbf{j}}$ denotes the vector of lifetimes
$(V_{j_{1}},\cdots,V_{j_{k}})$; for any subset $J\subseteq [n]$ we denote
 \begin{equation}\label{MinOrdStatInJ}
 V_{1:J}:= \min_{j\in J}V_j;
 \end{equation}
furthermore, if $\mathbf{j}\in \Pi(J)$, for
$0<v_{1}<\cdots<v_{k}\leq v$ the symbol
\begin{equation}
\mathbf{V}_{\mathbf{j}}=\mathbf{v};\quad V_{1:J^c}>v \label{DefmchrA}%
\end{equation}
briefly denotes the observation
\[
 V_{j_{1}}=v_{1},\cdots,\, V_{j_{k}}=v_{k},\quad \min_{j\in J^c}V_{j}>v.
\]
The observation
 $\mathbf{V}_{\mathbf{j}}=\mathbf{v};\; V_{1:J^c}>v$ in~\eqref{DefmchrA}
  is often called a
\textquotedblleft dynamic history".

For the given $k\geq 1$, $J\subset [n]$, with $|J|=k$, $\mathbf{j}=(j_1,...,j_k)\in \Pi(J)$, $v>0$, $0<v_{1}<\cdots<v_{k}\leq v$, $j\notin J$, the multivariate conditional hazard rates (m.c.h.r.)
function $v\mapsto\lambda_{j|\mathbf{j}}(v|v_{1},\cdots,v_{k})$ is defined by the limit
\begin{equation}
\lambda_{j|j_1,...,j_k}(v|v_{1},\cdots,v_{k}):=\lim_{\Delta\rightarrow0^+}\frac{\mathbb{P}\big(V_{j}\leq v+\Delta|\mathbf{V}_{\mathbf{j}}=\mathbf{v};V_{1:J^c}>v\big)}{\Delta}. 
 \label{DefmchrC}
\end{equation}
Furthermore, for any $j\in [n]$, the m.c.h.r.\@ function $\lambda_{j|\emptyset}(v):[0,\infty
)\rightarrow\lbrack0,\infty)$ is defined by
\begin{equation}
\lambda_{j|\emptyset}(v):=\lim_{\Delta\rightarrow0^+}\frac{
\mathbb{P}\big(V_{j}\leq v+\Delta|V_{1:n}>v\big)}{\Delta}. \label{DefmchrD}
\end{equation}

In the sequel we will  use the convention
\begin{equation}
\lambda_{j|j_1,...,j_k}(v|v_{1},\cdots,v_{k})=\lambda_{j|\emptyset}(v), \quad \text{when $k=0$.} \label{CONV-mchr-k=0}
\end{equation}

The above limits make sense in view of the assumption of absolute continuity
of the joint distribution of $V_{1},...,V_{n}$ and the m.c.h.r.\@ functions can
be seen as direct extensions of the common concept of hazard rate function of
a univariate non-negative random variable.

For the random vector $\mathbf{V}\equiv\left(  V_{1},...,V_{n}\right)  $, the
system of the m.c.h.r.\@ functions in~\eqref{DefmchrC} and~\eqref{DefmchrD} can
be computed in terms of the joint density $f_{\mathbf{V}}$. It is remarkable
the circumstance that the function $f_{\mathbf{V}}$ can be obtained from the
knowledge of the set of all the m.c.h.r.\@ functions. In fact, the following
formula holds:
\\
for $ (x_{1},...,x_{n})$, let $\mathbf{j}=(j_1,...,j_n)$  a permutation in $\Pi([n])$  such that
$$
x_{1:n}=x_{j_1}\leq x_{2:n}=x_{j_2}\leq \cdots \leq x_{n:n}=x_{j_n},
$$
then the joint density may be written as
\begin{align}\notag
&f_{\mathbf{V}}\left(x_{1},...,x_{n}\right)  =\lambda_{1}(x_{j_1}%
|\emptyset)\exp\Big\{-\int_{0}^{x_{j_1}}  \sum_{j=1}^{n}\lambda_{j|\emptyset}%
(u)\,  du\Big\}\cdot
\\\notag
&{}\quad\cdot\lambda_{j_2|j_1}(x_{j_2}|x_{j_1})\exp\Big\{-\int_{x_{j_1}}^{x_{j_2}}
\sum_{j\neq j_1}\lambda_{j|j_1}(s|x_{j_1})\,  ds\Big\}\cdot
\\\notag
&{}\quad\cdot\lambda_{j_{k+1}|j_1,...,j_k}(x_{j_{k+1}}|x_{j_1},...,x_{j_k})\cdot
\\\notag
&{}\quad \quad \qquad \qquad \cdot\exp\Big\{-\int_{x_{j_k}}^{x_{j_{k+1}}}  \sum_{j\notin \{j_1,\ldots, j_k\}}\lambda_{j|j_1,...,j_k}(u|
x_{j_1},...,x_{j_k})\,  du\Big\}\cdot
\\\notag
&{}\quad\cdots \cdots
\\ \notag
&{}\quad \cdot\lambda_{j_n|j_1,...,j_{n-1}}\left(x_{j_n}|x_{j
_1},...,x_{j_{n-1}}\right) \cdot
\\  \label{JointDensInTermsmchr}
&{}\quad \quad \qquad \qquad\cdot \exp
\Big\{-\int_{x_{j_{n-1}}}^{x_{j_n}}\lambda_{j_n|j_1,...,j_{n-1}}(u|x_{j_1},...,x_{j_{n-1}})du\Big\}.
\end{align}

Then, setting, for $k>1$, 
 \begin{equation}\label{LAMBDA}
\Lambda_{j_1,....,j_{k-1}}(u)=\sum_{j\notin\{ j_1,\ldots, j_{k-1}\}}\lambda_{j|j_1,...,j_{k-1}}(u|
x_{j_1},...,x_{j_{k-1}}), 
\end{equation}
using the convention that, when $k=1$,
\begin{align}\notag
\lambda_{j|j_1,...,j_{k-1}}(u|
x_{j_1},...,x_{j_{k-1}})&=\lambda_{j|\emptyset}(u),
\\\label{LAMBDA-k=0}
\Lambda_{j_1,....,j_{k-1}}(u)\Big|_{k=1}&=\Lambda_{\emptyset}(u)=\sum_{j\in [n]}\lambda_{j|\emptyset}(u),
\end{align}
and  the further convention that $x_{j_0}=0$,
 we can write shortly
\begin{align}\notag
&f_{\mathbf{V}}\left(x_{1},...,x_{n}\right)
=\Pi_{k=1}^n \lambda_{j_{k}|j_1,...,j_{k-1}}(x_{j_{k}}|x_{j_1},...,x_{j_{k-1}})\cdot
\\
&{}\quad \quad \qquad \qquad \cdot e^{-\int_{x_{j_{k-1}}}^{x_{j_{k}}}  \Lambda_{j_1,...,j_{k-1}}(u|
x_{j_1},...,x_{j_{k-1}})\,  du}\label{eq:f_V-short}
\end{align}

For proofs, details, and for general aspects of the m.c.h.r.\@ functions see
Shaked, Shantikumar (1990), (2007) \cite{Shaked-Shanthikumar-1990, Shaked-Shanthikumar-2007}.

See also the reviews contained within the more recent papers Shaked, Shantikumar
(2015) \cite{Shaked-Shanthikumar-2015}, Spizzichino (2019) \cite{Spizzichino-2019-Rel}

\bigskip

For our purposes it is relevant to highlight that, in particular, the
functions  $\lambda_{j|\emptyset}(v)$, for $j\in [n]$,
 are strictly
related to the marginal law of the minimal order statistics $V_{1:n}\equiv V_{1:[n]}=\min_{j=1,...,n}V_{j}.$
In this respect the following identity holds:
\begin{equation}
\mathbb{P}(V_{1:n}>v)=\exp\Big\{-\int_{0}^{v}\sum_{j=1}^{n}\lambda_{j|\emptyset}%
(s)ds\Big\}=\exp\Big\{-\int_{0}^{v}\Lambda_{\emptyset}(s)ds\Big\}. \label{LawOfMinimum}%
\end{equation}
(See, e.g., De Santis et al. (2021) \cite{DeSantis-et-al-2021}, where a
more detailed description of the probabilistic behavior of $V_{i:n}$ in terms
of $\lambda_{j|\emptyset}(v)$, for $j\in [n]$,
 is pointed out).
\\
For an arbitrary subset of indices $A\subset\left[  n\right]  $, we can also
consider the survival function of~$V_{1:A}$, the minimal order statistics among the
variables $V_{j}$, with $j\in A$.

It is important to stress that formulas similar to~\eqref{LawOfMinimum} can be obtained for
the survival function of $V_{1:A}$, provided a different set of m.c.h.r.\@
functions is appropriately considered. On this purpose we can notice that, for
$A\subset\left[  n\right]  $, the $|A|$-dimensional joint distribution of
$\{V_j, \, j\in A\}$ is absolutely continuous as well. Thus, for any $k\leq$ $|A|$ and for
any $k$-permutation $\left(  j_{1},...,j_{k}\right)  \in\Pi_{k}\left(
A\right)  $, $j\in A\backslash\{j_{1},...,j_{k}\}$, $0<v_{1}<\cdots<v_{k}\leq v$, we can
consider the m.c.h.r.\@ functions defined as follows
\begin{equation}
\lambda^A_{j|j_1,...,j_k}(v|v_{1},\cdots,v_{k}):=\lim_{\Delta\rightarrow0^+}\frac{\mathbb{P}\big(V_{j}
\leq
v+\Delta|\mathbf{V}_{\mathbf{j}}=\mathbf{v};V_{1: A\setminus \{j_1,...,j_k\}}>v\big)}{\Delta
},  \label{DefmchrC-J}
\end{equation}
and, for $j\in A$, $v>0$

\begin{equation}
\lambda^A_{j|\emptyset}(v):=\lim_{\Delta\rightarrow0^+}\frac{
\mathbb{P}\big(V_{j}\leq v+\Delta|V_{1:A}>v\big)}{\Delta} .
\label{DefmchrD-J}
\end{equation}
With the above notation,  one can write
\begin{equation}
\mathbb{P}(V_{1:A}>v)=\exp\Big\{-\int_{0}^{v}\sum_{j\in A}\lambda^A_{j|\emptyset}(s)ds\Big\}=\exp\left\{-\int_{0}^{t}\Lambda^A_{\emptyset}(s)\,ds\right\},
 \label{LawOfMinimum-A}
\end{equation}
where
$$
\Lambda^A_{\emptyset}(t):=\sum_{j\in A}\lambda^A_{j|\emptyset}(t).
$$
\medskip

In view of the equivalence of condition~\eqref{eq:cond-H-H'-uguali}  and minimal stability (i.e., conditions~\textbf{\emph{(ii)}} and~\textbf{\emph{(iii)}} of Proposition~\ref{prop:equivalent-properties}), for our purposes it is also relevant to
 express
 $$\mathbb{P}\big(V_j>t,  \forall j\in A,  V_i\leq t,  \forall i \in[n]\setminus A  \big)
 $$
 in terms of the m.c.h.r.\@ functions. To this end we set
 for any $d$-permutation $\mathbf{j}= (j_1,...,j_d)$
\begin{align}
\Psi(t;[n], \mathbf{j}) :=&\mathbb{P}\big(V_{j_1}< V_{j_2}<\cdots<V_{j_d}\leq t, V_i>t\, \forall i\notin \{j_1,...,j_d\}\big)\notag
\\\notag
&= \int_0^t du_1\int_{u_1}^t du_2 \cdots \int_{u_{d-1}}^t  du_d\, e^{-\int_{u_d}^{t} \Lambda
 _{j_{1},j_{2},...,j_{d}}(u|u_1,u_2,...,u_d)  du}\cdot
 \\
 &{}\qquad \quad
 \cdot \prod_{k=1}^d  \lambda_{j_{k}|j_{1},...,j_{k-1}}(u_k| u_1,...,u_{k-1})
e^{-\int_{u_{k-1}}^{u_k} \Lambda
_{j_{1},j_{2},...,j_{k-1}}(u|u_1,u_2,..., u_{k-1}) du}
\label{def:Psi-t-n-j}
 \end{align}

Then, for any subset $A\subset [n]$,
we can write
 \begin{align}\label{eq:P(NA=0--NAc=n-d)}
&\mathbb{P}\big(V_j>t,  \forall j\in A,  V_i\leq t,  \forall i \in[n]\setminus A  \big)=  \sum_{\mathbf{j}\in \Pi([n]\setminus A)} \,\Psi(t;[n],\mathbf{j}).
 \end{align}

In some applications we will use also the following alternative  expression for $\Psi(t;[n],\mathbf{j})$
\begin{align}\notag
	 & \Psi(t;[n],\mathbf{j}) =
	\int_{0}^{t}ds_d\int_{0}^{s_d}ds_{d-1}\cdots\int_{0}^{s_2}ds_{1} \,
	e^{-\int_{s_{d}}^{t}\Lambda_{j_1,\dots,j_d}(\tau|s_{1},\dots,s_{d})d\tau}\cdot
	\\
	& {}\qquad\qquad \qquad
	\cdot \prod_{\ell=1}^{d}\lambda_{j_{\ell}|j_1,\dots,j_{\ell-1}}(s_{\ell}|s_{1},\dots,s_{\ell-1})
	e^{-\int_{s_{\ell-1}}^{s_{\ell}}\Lambda_{j_1,\dots,j_{\ell-1}}(\tau|s_{1},\dots,s_{\ell-1})
		d\tau}\,.
	\label{Psi_xeqdiff}
\end{align}

Concerning the above arguments and coming back to our lifetimes $T_1,...,T_r$, we start with the case of exchangeable  lifetimes
$T_{1}$,...,$T_{r}$ by noticing
 that the very condition
of exchangeability leads to a remarkable simplification of notation, technical
results, and conceptual aspects.

First notice that the symmetry conditions among the different random
variables, as requested by exchangeability, imply a specially simple form
for the m.c.h.r.\@ functions. More precisely, the function $\lambda
_{j|i_1,...,i_k}(v|v_{i_{1}},\ldots,v_{i_{k}})$ cannot depend on the index $j\notin \{i_1,...,i_k\}$. Furthermore all the $k$-permutations $(i_1,...,i_k)$ are to
be considered as similar one another and thus the dependence of a m.c.h.r.\@
function w.r.t.\@ to $(i_1,...,i_k)$ is encoded in the number $k$.

For the exchangeable case, we then introduce the symbols $\mu(t|k;t_{1}%
,\ldots,t_{k}),\mu(t|0)$ with the following meaning: for any $\mathbf{j}=(j_1,...,j_k)\in \Pi(I)$
\begin{align}
\lambda_{j|j_1,...,j_k}(t|t_{1},\ldots,t_{k})    =\mu(t|k;t_{1},\ldots,t_{k}), \qquad \lambda_{j|\emptyset}(t)    =\mu(t|0).\label{ExchMCHR}
\end{align}

Using this notation, and denoting by $t_{1:r},...,t_{r:r}$ the values
$t_{1},\ldots,t_{k}$ rearranged in an increasing order, the equation
\eqref{JointDensInTermsmchr} for the joint density $f_{\mathbf{T}}$ takes the form
\begin{equation}\label{JointDensInTermsmchrEX-short}
f_{\mathbf{T}}\left(  t_{1},...,t_{r}\right)=
\Pi_{k=0}^{r-1}\, \mu(t_{k+1:r}|k;t_{1:r},...,t_{k:r})e^{-(r-k)\, \int_{t_{k:r}}^{t_{k+1:r}}\,\mu(s|k;t_{1:r},...,t_{k:r})\, ds},
\end{equation}
where we used the convention that $t_{0:k}=0$.

It is immediately seen that such a joint density $f_{\mathbf{T}}\left(
t_{1},...,t_{r}\right)  $ is exchangeable and then the following
characterization of exchangeability holds.
\begin{proposition}\label{prop:caratt-Scambiabili}
Non-negative random variables $T_{1},...,T_{r}$ with an absolutely continuous
joint distribution are exchangeable if and only if the corresponding m.c.h.r.\@
functions are of the form~\eqref{ExchMCHR}.
\end{proposition}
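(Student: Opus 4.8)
The statement is an equivalence, so the plan is to treat the two implications separately; the forward implication (sufficiency of the symmetric form) is short, while the converse (its necessity) carries the real content.

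For the \emph{if} direction I would substitute the assumed symmetric form~\eqref{ExchMCHR} directly into the reconstruction formula~\eqref{JointDensInTermsmchr}. Under~\eqref{ExchMCHR} every inner sum $\Lambda_{j_1,\dots,j_{k-1}}(u)$ from~\eqref{LAMBDA} has all of its summands equal to the common value $\mu(u|k-1;\dots)$, so it collapses to that value times the number of surviving coordinates, reproducing exactly the multiplicities $(r-k)$ that appear in~\eqref{JointDensInTermsmchrEX-short}; carrying this through the product yields~\eqref{JointDensInTermsmchrEX-short}. The decisive observation is then that the right-hand side of~\eqref{JointDensInTermsmchrEX-short} depends on $(t_1,\dots,t_r)$ only through the order statistics $t_{1:r}\le\cdots\le t_{r:r}$; hence $f_{\mathbf{T}}$ is invariant under every permutation of its arguments, which is precisely exchangeability.

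For the \emph{only if} direction, assume $T_1,\dots,T_r$ exchangeable, i.e.\@ $f_{\mathbf{T}}$ is a symmetric function, and rewrite each m.c.h.r.\@ function as a ratio of two partial densities obtained from $f_{\mathbf{T}}$ by integrating out the still-surviving coordinates. Fix $k$, a subset $J=\{j_1,\dots,j_k\}$, an ordering $\mathbf{j}\in\Pi(J)$, and $j\notin J$, with $0<v_1<\cdots<v_k\le v$. Let $\phi_{J,\mathbf{j}}(v_1,\dots,v_k;v)$ be the sub-density of the event $\{V_{j_\ell}=v_\ell,\ \ell=1,\dots,k;\ V_i>v\ \forall i\in J^c\}$, namely $f_{\mathbf{T}}$ evaluated with $v_\ell$ in slot $j_\ell$ and integrated over $(v,\infty)$ in every slot of $J^c$, and let $\psi_{J,\mathbf{j},j}$ be the analogous quantity in which, additionally, coordinate $j$ is frozen at $v$ and only the slots of $J^c\setminus\{j\}$ are integrated over $(v,\infty)$. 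Using absolute continuity and the Lebesgue differentiation theorem, the limit in~\eqref{DefmchrC} equals the ratio $\psi_{J,\mathbf{j},j}/\phi_{J,\mathbf{j}}$, and the limit in~\eqref{DefmchrD} is the same computation with $k=0$.

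The key step is then a symmetry argument: since $f_{\mathbf{T}}$ is symmetric, relabelling slots leaves it unchanged, so both $\phi_{J,\mathbf{j}}$ and $\psi_{J,\mathbf{j},j}$ are independent of $J$, of the ordering $\mathbf{j}$, and of $j$, depending only on the cardinality $k$ and on the ordered values $v_1,\dots,v_k,v$. Concretely, given two admissible data $(J,\mathbf{j},j)$ and $(J',\mathbf{j}',j')$ with $|J|=|J'|=k$, a permutation $\sigma$ of $[r]$ sending $j_\ell\mapsto j'_\ell$, $j\mapsto j'$ and $J^c\setminus\{j\}$ onto $J'^c\setminus\{j'\}$ carries one defining integral into the other, and symmetry of $f_{\mathbf{T}}$ equates them. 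Hence the ratio depends only on $k$ and on $v_1,\dots,v_k,v$, which is exactly the assertion $\lambda_{j|j_1,\dots,j_k}(v|v_1,\dots,v_k)=\mu(v|k;v_1,\dots,v_k)$ and $\lambda_{j|\emptyset}(v)=\mu(v|0)$, i.e.~\eqref{ExchMCHR}. I expect the only delicate point to be the justification that the defining limits in~\eqref{DefmchrC}--\eqref{DefmchrD} genuinely coincide with the ratio of partial densities, a differentiation-of-conditional-probability argument that the absolute-continuity hypothesis guarantees; once this identification is secured, the symmetry argument is immediate and the rest is bookkeeping.
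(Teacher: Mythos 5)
Your proof is correct and follows essentially the same route as the paper: the \emph{if} direction by observing that under~\eqref{ExchMCHR} the density formula~\eqref{JointDensInTermsmchr} collapses to~\eqref{JointDensInTermsmchrEX-short}, which depends on $(t_1,\dots,t_r)$ only through their order statistics and is therefore symmetric. For the \emph{only if} direction the paper merely asserts that exchangeability forces the symmetric form of the m.c.h.r.\@ functions, whereas you supply the missing detail (the identification of each m.c.h.r.\@ function as a ratio of partial densities of the symmetric $f_{\mathbf{T}}$, invariant under relabelling of slots); this is the intended argument and your elaboration is sound.
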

Coming back to the case of minimally stable lifetimes $T_{1},...,T_{r}$, we may use   Eq.~\eqref{def:Psi-t-n-j}, Eq.~\eqref{eq:P(NA=0--NAc=n-d)}, and  Proposition~\ref{prop:equivalent-properties} to get the following
 characterization of minimal stability.
\begin{proposition}\label{prop:caratt-min-stable}
Non-negative random variables $T_{1},...,T_{r}$ with an absolutely continuous
joint distribution are minimally stable if and only if the corresponding m.c.h.r.\@ satisfy the condition that
whenever $A, B \subset [r]$, with $|A|=|B|\leq r-1$, then
\begin{align}\label{cond:caratt-min-stable-short}
\sum_{\mathbf{j}\in \Pi(A)}\Psi(t;[r],\mathbf{j})= \sum_{\mathbf{j}^\prime\in \Pi(B)}\Psi(t;[r],\mathbf{j}^\prime), \quad t>0,
\end{align}
where we have used the notation~\eqref{def:Psi-t-n-j}.
\end{proposition}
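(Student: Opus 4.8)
The plan is to reduce the statement to a characterization already in hand, namely the equivalence of conditions \textbf{(ii)} and \textbf{(iii)} in Proposition~\ref{prop:equivalent-properties}, and then to translate condition \textbf{(ii)} into the language of the m.c.h.r.\@ functions through the representation~\eqref{eq:P(NA=0--NAc=n-d)}. The quantities in~\eqref{cond:caratt-min-stable-short} are, under absolute continuity, exactly the probabilities of the events ``the failed set at time $t$ is $A$'': applying~\eqref{eq:P(NA=0--NAc=n-d)} with $n=r$ and $V_i=T_i$ gives, for every $C\subseteq[r]$ and every $t>0$,
\[
\sum_{\mathbf{j}\in\Pi(C)}\Psi(t;[r],\mathbf{j})=\mathbb{P}\big(T_j>t,\ \forall j\in C^c,\ T_i\leq t,\ \forall i\in C\big).
\]
Since the right-hand side is determined by the m.c.h.r.\@ functions via~\eqref{def:Psi-t-n-j} (equivalently via the density~\eqref{eq:f_V-short}), this single identity is the whole bridge, and it is an equality, so it can be used in either direction.

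First I would set $S(t;C):=\sum_{\mathbf{j}\in\Pi(C)}\Psi(t;[r],\mathbf{j})$ and record, from the display above, that $S(t;C)$ is the probability that the failed set at time $t$ is exactly $C$. Writing $H=C^c$ turns condition \textbf{(ii)}, i.e.~\eqref{eq:cond-H-H'-uguali}, into the assertion that $S(t;C)$ depends on $C$ only through $|C|$; since $|H|=|H'|$ is the same as $|C|=|C'|$, and the constraint $|A|\leq r-1$ in~\eqref{cond:caratt-min-stable-short} corresponds to $|H|\geq 1$, the proposed condition is exactly~\eqref{eq:cond-H-H'-uguali} read on the complementary sets. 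I would then note that the single cardinality not matched by this correspondence, $|H|=|H'|=0$, is vacuous, as are the extreme cardinalities $|C|=r$ and $|H|=r$, each being realized by a unique subset; hence~\eqref{cond:caratt-min-stable-short} and condition \textbf{(ii)} carry the same information.

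With these observations the two implications are immediate and symmetric. If $T_1,\dots,T_r$ are minimally stable, then condition \textbf{(ii)} holds by Proposition~\ref{prop:equivalent-properties}, and reading the displayed identity from right to left converts it into~\eqref{cond:caratt-min-stable-short}. Conversely, if the m.c.h.r.\@ functions satisfy~\eqref{cond:caratt-min-stable-short}, the same identity read from left to right gives condition \textbf{(ii)}, whence minimal stability again by Proposition~\ref{prop:equivalent-properties}.

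I do not expect a genuine analytic obstacle here: the content of the argument is the translation via~\eqref{eq:P(NA=0--NAc=n-d)}, and the only point requiring care is the index bookkeeping\,---\,checking that the complement map $C\leftrightarrow C^c$ matches the cardinality ranges and that the boundary cardinalities contribute nothing. The one structural hypothesis that must be flagged is absolute continuity, which is precisely what makes the m.c.h.r.\@ functions and the integrals~\eqref{def:Psi-t-n-j} well defined and guarantees the representation~\eqref{eq:P(NA=0--NAc=n-d)} on which everything rests.
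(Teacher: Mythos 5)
Your proof is correct and follows exactly the route the paper indicates: the paper gives no written-out proof of Proposition~\ref{prop:caratt-min-stable}, only the remark that it follows from Eq.~\eqref{def:Psi-t-n-j}, Eq.~\eqref{eq:P(NA=0--NAc=n-d)} and Proposition~\ref{prop:equivalent-properties}, which are precisely the three ingredients you assemble. Your additional bookkeeping (the complement map $C\leftrightarrow C^{c}$ and the vacuousness of the boundary cardinalities) is accurate and simply makes explicit what the paper leaves implicit.
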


%%%%%%SECTION%%%%%%%%%%%%%%%%%%%%%%%%%%%%%%%%

\section{Relations among diagonal sections of 
DD copulas,
distributions of order statistics, and multivariate conditional hazard rates}\label{sec:Relations}

Concerning with the joint distribution of
minimally stable lifetimes
 $T_{1},...,T_{r}$, it has been pointed out in Remark \ref{rem:equiv-inform} that the two systems
of functions
\begin{itemize}
\item[\emph{\textbf{(a)}}]\quad
$\{\overline{G};\delta_{2},...,\delta_{r}\}$,
\vspace{2mm}
\item[\emph{\textbf{(b)}}]\quad
$ \{\overline{G}_{1:r},...,\overline{G}_{r:r}\}$
\end{itemize}
convey the same information about the joint distribution of $T_{1}$,...,$T_{r}$ and that they can be then recovered one from the
other as shown by the formula~\eqref{OrdStatEtDiagonals} and Proposition~\ref{prop:delta-da-G-k}.

We now restrict attention on the absolutely continuous case, where the joint
distribution of $T_{1}$,...,$T_{r}$ can be described in terms
of the corresponding m.c.h.r.\@ functions. In this respect, in terms of those
functions, we aim to single out characteristics of the joint distribution,
whose knowledge may be equivalent to that of the systems of functions in~\emph{\textbf{(a)}} and \emph{\textbf{(b)}}.

It is convenient on this purpose to fix attention on the marginal
distributions, of the different dimensions, for the random vector $(T_{1},...,T_{r})$. The following simple remark has a central role for
our task.

As already observed  (see Eq.~\eqref{LawOfMinimum}) the m.c.h.r.\@
functions $\lambda_{1|\emptyset}(t),...,\lambda_{r|\emptyset}(t)$ are strictly
related to the marginal laws  
of the minimal order statistics, indeed
\begin{equation}
		 \overline{G}_{1:r}(t)=\mathbb{P}(T_{1:r}>t)
				 =\exp\Big\{-\int_{0}^{t}\Lambda_{\emptyset}(s)\,ds\Big\},  \quad t>0,
		 \label{SurvFunctOfMinimumIDDD}
\end{equation}
where $\Lambda_\emptyset(t)=\sum_{j=1}^{r}\lambda_{j|\emptyset}(t)$.
Similarly (see Eq.s~\eqref{DefmchrD-J} and~\eqref{LawOfMinimum-A})  the m.c.h.r.\@ functions~$\lambda^A_{j|\emptyset}(t)$, $j\in A$,  are  related to the law of the minimum on an arbitrary set   $A\subset[r]$, indeed
\begin{equation}\label{Prob-T1:A>t}
	\mathbb{P}(T_{1:A}>t)
	=\exp\left\{-\int_{0}^{t}\Lambda^A_{\emptyset}(s)\,ds\right\},\quad t>0,
\end{equation}
where
$
\Lambda^A_{\emptyset}(t)=\sum_{j\in A}\lambda^A_{j|\emptyset}(t)
$
is the  one-dimensional failure rate of $T_{1:A}$. Concerning with this notation, observe that the failure rate  $ \Lambda^{[r]}_\emptyset(t)$ coincides with $\Lambda_\emptyset(t)$.
 \\

By the assumption of minimal stability we may restrict attention on  the subset of
variables $T_{1},...,T_{d}$, which are minimally stable, as well. Indeed, since $\mathbb{P}(T_{1:A}>t)=\mathbb{P}(T_{1:B}>t)$ for any $A,B\subseteq[r]$ such that $|A|=|B|$, then for any~$d=1,...,r$,
\begin{equation}\label{LAMBDA-d}
\Lambda^A_{\emptyset}(t)=\Lambda^{[d]}_\emptyset(t)=\sum_{j=1}^d\lambda^{[d]}_{j|\emptyset}(t),\quad  t\geq 0,  \quad
 \forall\, A\subset[r], \; \text{with}\; |A|=d, \end{equation}
 and therefore
\begin{equation}\label{Prob-T1:A>t=Prob-T1:d}
	\mathbb{P}(T_{1:A}>t)
	=\exp\left\{-\int_{0}^{t}\Lambda^{[d]}_{\emptyset}(s)\,ds\right\},\quad t>0, \quad \forall\, A\subset[r], \; \text{with}\; |A|=d.
\end{equation}

It will emerge that  the information contained in the systems \emph{\textbf{(a)}}  and \emph{\textbf{(b)}} is equivalent to the  knowledge embedded in the systems of functions defined by
\begin{itemize}
\item[\emph{\textbf{(c)}}]\quad
$\{\Lambda^{[1]}_\emptyset,...,\Lambda^{[r]}_\emptyset\}$.
\end{itemize}

Such equivalence is demonstrated by the relations respectively tying   \emph{\textbf{(c)}} with \emph{\textbf{(a)}}
and  \emph{\textbf{(c)}} with \emph{\textbf{(b)}}. Such relations will be detailed
below by means of the following Propositions~\ref{Talamone2-min} and~\ref{Circeo-min}. More precisely, in Proposition~\ref{Talamone2-min} we express \emph{\textbf{(a)}} and \emph{\textbf{(b)}} in terms of \emph{\textbf{(c)}}, whereas in Proposition~\ref{Circeo-min} \emph{\textbf{(c)}} is written in terms of \emph{\textbf{(a)}} and  \emph{\textbf{(b)}}.

	\begin{proposition}\label{Talamone2-min}
	Let the joint distribution of the minimally stable random lifetimes $T_{1},...,T_{r}$ be absolutely
	continuous, with
	$	\Lambda^{[d]}_\emptyset(t),$ for $d=1,...,r.$
	Then
	\begin{description}
\item[\emph{\textbf{(i)}}] \;  for any $t >0$
	\begin{equation}\label{barra-G-con.LAMBA-1}
	\overline{G}(t)=\exp\left\{-\int_0^t \Lambda^{[1]}_\emptyset(s)\, ds \right\},
 \end{equation}
 and furthermore, for $d=2,...,r$ and for any $u\in\left[  0,1\right]$,
\begin{equation}
	\delta_{d}(u)=\exp\left\{ -\int_{0}^{\overline{G}^{-1}(u)}\Lambda^{[d]}_\emptyset(s)\,ds\right\}; \label{CapDAil-min}
\end{equation}
\item[\emph{\textbf{(ii)}}]\;for $\ell=1,...,r$ and for any $t>0$,
\begin{equation}
	\overline{G}_{\ell:r}(t)=\sum_{h=r-\ell+1}^{r}\left(  -1\right)  ^{h-r-1+\ell}
	\binom{r}{h}\binom{h-1}{r-\ell}\exp\left\{  -\int_{0}^{t}\Lambda^{[h]}_\emptyset(s)\,ds\right\}. \label{CapDAntibes-min}
\end{equation}
\end{description}
\end{proposition}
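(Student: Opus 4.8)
The plan is to derive everything by matching two expressions for the same quantity: the survival function of the minimum over a $d$-element subset. For minimally stable lifetimes this minimum's survival function equals, on one side, the diagonal section $\delta_d$ composed with $\overline{G}$ (this is \eqref{SantaSevera}), and, on the other side, it admits the exponential m.c.h.r.\ representation \eqref{Prob-T1:A>t=Prob-T1:d}. Equating the two is the entire content of part~\textbf{(i)}, and part~\textbf{(ii)} then follows by substitution into \eqref{OrdStatEtDiagonals}.

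First I would establish \eqref{barra-G-con.LAMBA-1}. Taking $d=1$ in \eqref{Prob-T1:A>t=Prob-T1:d}, any singleton $A=\{j\}$ gives $T_{1:A}=T_j$, so that $\mathbb{P}(T_j>t)=\exp\{-\int_0^t \Lambda^{[1]}_\emptyset(s)\,ds\}$; since minimal stability forces the $T_j$ to be identically distributed, the left-hand side is exactly $\overline{G}(t)$, which is the claim. Next, for \eqref{CapDAil-min}, I fix $d\in\{2,\dots,r\}$ and a subset $A=\{j_1,\dots,j_d\}$. By \eqref{SantaSevera} one has $\mathbb{P}(T_{j_1}>t,\dots,T_{j_d}>t)=\delta_d(\overline{G}(t))$, while the same probability is precisely $\mathbb{P}(T_{1:A}>t)$, which by \eqref{Prob-T1:A>t=Prob-T1:d} equals $\exp\{-\int_0^t \Lambda^{[d]}_\emptyset(s)\,ds\}$. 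Equating these and using the invertibility of $\overline{G}$ guaranteed by hypothesis \textbf{(H)} to put $u=\overline{G}(t)$, i.e.\ $t=\overline{G}^{-1}(u)$, yields \eqref{CapDAil-min}.

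Part~\textbf{(ii)} is then immediate. The computation just performed shows $\delta_h(\overline{G}(t))=\exp\{-\int_0^t \Lambda^{[h]}_\emptyset(s)\,ds\}$ for every $h$, so substituting this identity into \eqref{OrdStatEtDiagonals} replaces each $\delta_h(\overline{G}(t))$ by its exponential form term by term, producing \eqref{CapDAntibes-min} with the same combinatorial coefficients.

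The argument is essentially an assembly of relations already recorded in the excerpt, so I do not expect a genuine obstacle; the only step deserving care is the reduction \eqref{LAMBDA-d} underlying \eqref{Prob-T1:A>t=Prob-T1:d}, namely that the aggregated failure rate $\Lambda^A_\emptyset$ of the minimum depends on $A$ only through its cardinality. This is exactly the point at which minimal stability (equivalently, the DD property of Proposition~\ref{prop:equivalent-properties}) enters, since it makes $\mathbb{P}(T_{1:A}>t)$ common to all subsets $A$ of a fixed size and thereby legitimizes writing $\Lambda^{[d]}_\emptyset$ in place of $\Lambda^A_\emptyset$.
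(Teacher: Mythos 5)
Your proposal is correct and follows essentially the same route as the paper: both part \textbf{(i)} statements are obtained by equating the diagonal-section expression $\delta_d(\overline{G}(t))$ from \eqref{SantaSevera} with the exponential m.c.h.r.\ representation of $\mathbb{P}(T_{1:A}>t)$, and part \textbf{(ii)} follows by substitution into \eqref{OrdStatEtDiagonals}. Your closing remark correctly identifies the role of minimal stability in justifying $\Lambda^{A}_{\emptyset}=\Lambda^{[d]}_{\emptyset}$ for $|A|=d$, which is exactly the point the paper also relies on.
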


\begin{proof}[Proof of \textbf{(i)}]	
Due	to minimal stability the random variables $T_i$  share the  marginal survival function with  $T_1$,
i.e.,
$$
\exp\left\{ -\int_0^t \lambda_{i|\emptyset}(s)\,ds\right\}=\exp\left\{ -\int_0^t \lambda_{1|\emptyset}(s)\,ds\right\}, \quad t>0.
$$
Therefore $\lambda_{i|\emptyset}(t)=\lambda_{1|\emptyset}(t)\equiv \Lambda^{[1]}_\emptyset(t)$, for any $t>0$, and~\eqref{barra-G-con.LAMBA-1} follows.
\\

As already observed, we may concentrate attention on the	random variables $T_{1},...,T_{d}$. Therefore to prove~\eqref{CapDAil-min},  on the one hand one has
	\[
	\mathbb{P(}\min_{i=1,...,d}T_{i}>t)=\exp\left\{-\int_{0}^{t}\Lambda^{[d]}_\emptyset(s)\,ds\right\}.
	\]
	On the other hand, taking into account~\eqref{SantaSevera} we can also write%
	\[
	\mathbb{P(}\min_{i=1,...,d}T_{i}>t)=\delta_{d}(\overline{G}(t)).
	\]	
	Thus Eq.~\eqref{CapDAil-min} is immediately achieved by comparing the preceding 	two formulas.
\\

\noindent \emph{Proof of \textbf{(ii)}. }
Taking into account Proposition~\ref{prop:min-stable-Ord-Stat},	 Eq.~\eqref{CapDAntibes-min} is immediately achieved by combining Eq.~\eqref{CapDAil-min} with Eq.~\eqref{OrdStatEtDiagonals}.
\end{proof}

\bigskip

\begin{proposition}\label{Circeo-min}Let the joint distribution of the minimally stable $T_{1},...,T_{r}$ be absolutely
	continuous.
	\\
	\begin{description}
\item[\emph{\textbf{(i)}}] \;
	 If the diagonal sections~$\delta_{2},...,\delta_{r}$ and
	the one-dimensional marginal survival function~$\overline{G}$   are given, then
\begin{equation}
	\Lambda^{[\ell]}_\emptyset(t)=-\frac{d}{dt}\log\left[  \delta
	_{\ell}(\overline{G}(t))\right],  \label{Sorrento-min}%
\end{equation}
   for $\ell=1,2,...,r$ and any $t> 0$.
		\\
\item[\emph{\textbf{(ii)}}] \;		
If  the marginal survival functions $\overline{G}_{1:r}
	,...,\overline{G}_{r:r}$ for the order statistics are given,   
respectively	denoting by $g_{1:r},...,g_{r:r}$ the probability density of the order statistics $T_{1:r},...,T_{r:r}$, then
\begin{align}\notag
	\Lambda^{[\ell]}_\emptyset(t)&=
\frac{\sum_{h=\ell}^{r}   (h)_\ell\, \big(g_{r-h+1:r}(t)- g_{r-h:r}(t)\big)
}{\sum_{h=\ell}^{r}  (h)_\ell\, \Big(\overline{G}_{r-h+1:r}(t)- \overline{G}_{r-h:r}(t)\Big)}
\\
&=	\frac{\sum_{k=1}^{r-(\ell-1)}(r-k)_{\ell-1}\, g_{k:r}(t)
}{\sum_{k=1}^{r-(\ell-1)} (r-k)_{\ell-1} \, \overline{G}_{k:r}(t)},
	\label{Palinuro-min}%
\end{align}
for $\ell=1,...,r$ and any $t>0$.
\end{description}
\end{proposition}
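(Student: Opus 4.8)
The plan is to exploit the single structural fact that, for minimally stable lifetimes, $\Lambda^{[\ell]}_\emptyset$ is nothing but the one-dimensional failure rate of the minimum $T_{1:A}$ taken over \emph{any} subset $A$ of cardinality $\ell$, and then to feed into this identity the two already-established closed-form expressions for $\mathbb{P}(T_{1:A}>t)$. Concretely, I would first record the unifying observation: by~\eqref{LawOfMinimum-A} together with the minimal-stability reductions~\eqref{LAMBDA-d}--\eqref{Prob-T1:A>t=Prob-T1:d}, for every $A\subset[r]$ with $|A|=\ell$ one has
\[
\mathbb{P}(T_{1:A}>t)=\exp\left\{-\int_0^t \Lambda^{[\ell]}_\emptyset(s)\,ds\right\}.
\]
Since condition~\textbf{(H)} makes this survival function continuous and strictly positive, I may take logarithms and differentiate, obtaining the key identity
\[
\Lambda^{[\ell]}_\emptyset(t)=-\frac{d}{dt}\log\mathbb{P}(T_{1:A}>t),\qquad |A|=\ell.
\]
Both parts then reduce to inserting a representation of $\mathbb{P}(T_{1:A}>t)$.

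For part~\textbf{(i)} I would use~\eqref{SantaSevera}, which under minimal stability gives $\mathbb{P}(T_{1:A}>t)=\delta_\ell(\overline{G}(t))$ for $|A|=\ell$; substituting this into the key identity yields~\eqref{Sorrento-min} at once. (Equivalently, one may start from~\eqref{CapDAil-min}, set $u=\overline{G}(t)$ so that $\overline{G}^{-1}(u)=t$, take logarithms, and differentiate by the fundamental theorem of calculus.)

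For part~\textbf{(ii)} I would instead insert the two expressions of Proposition~\ref{prop:delta-da-G-k}. Absolute continuity guarantees that each $\overline{G}_{k:r}$ is differentiable with $\overline{G}'_{k:r}=-g_{k:r}$, so the finite sums in~\eqref{prob-surv-min-on-J-a} and~\eqref{prob-surv-min-on-J} may be differentiated termwise. Forming the logarithmic derivative $-(d/dt)\log\mathbb{P}(T_{1:A}>t)$, the combinatorial prefactors $(r)_\ell^{-1}$ (respectively $\ell/(r)_\ell$) occur identically in numerator and denominator and cancel, leaving precisely the two ratios of~\eqref{Palinuro-min}.

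There is no genuinely hard step here: the proposition is essentially the logarithmic-derivative inverse of Proposition~\ref{Talamone2-min} combined with Proposition~\ref{prop:delta-da-G-k}. The only points requiring care are the regularity justifications --- strict positivity of the survival functions (from~\textbf{(H)}) so that the logarithms are well defined, and absolute continuity so that the densities $g_{k:r}$ exist and termwise differentiation is legitimate --- together with the elementary bookkeeping that the upper summation limit $r-\ell+1$ of~\eqref{prob-surv-min-on-J} coincides with $r-(\ell-1)$ in~\eqref{Palinuro-min}.
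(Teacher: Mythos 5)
Your proposal is correct and follows essentially the same route as the paper: part \textbf{(i)} is the logarithmic-derivative inversion of~\eqref{CapDAil-min} (equivalently, of $\mathbb{P}(T_{1:A}>t)=\exp\{-\int_0^t\Lambda^{[\ell]}_\emptyset(s)\,ds\}=\delta_\ell(\overline{G}(t))$), and part \textbf{(ii)} substitutes the representations of Proposition~\ref{prop:delta-da-G-k} into the same logarithmic derivative, with the combinatorial prefactors cancelling in the ratio. Your added remarks on the regularity needed to justify the logarithm and the termwise differentiation are appropriate but do not change the argument.
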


\begin{proof}[Proof of \textbf{(i)}]
  For $\ell=1$ Eq.~\eqref{Sorrento-min}  follows immediately by Eq.~\eqref{barra-G-con.LAMBA-1}. For $\ell=2,...,r$,  Eq.~\eqref{Sorrento-min} is immediately achieved by inverting the Eq.~\eqref{CapDAil-min}.
\\

\noindent \emph{Proof of \textbf{(ii)}.}
Eq.~\eqref{Palinuro-min} is obtained by resorting to 
 Eq.~\eqref{Sorrento-min}
and Proposition~\ref{prop:delta-da-G-k},  together with the fact that $\mathbb{P}\big( T_{1:A}>t\big)=\delta_d\big( \overline{G}(t)\big)$.
\end{proof}

These results can be further specialized to the  case of exchangeable times $T_1,...,T_r$.

For any $d=1,...,r$,  the exchangeable random lifetimes $T_1,...,T_d$,  are characterized  by the m.c.h.r.\@ functions
$\mu^{[d]}(t|k;t_1,..,t_k)$
for $k=0,...,d-1$, $0<t_1<\cdots<t_k\leq t$, (see~\eqref{ExchMCHR} and Proposition~\ref{prop:caratt-Scambiabili} for $r=d$),  and therefore
\begin{equation}\label{d-mu[d]}
\Lambda^{[d]}_\emptyset(t)= d \mu^{[d]}(t|0).
\end{equation}
In view of this remark,  the system of functions \textbf{\emph{(c)}} is equivalent to the  set of functions
\begin{itemize}
\item[\textbf{\emph{(c')}}]\quad
$
\{\mu^{[1]}(t|0),...,\mu^{[r]}(t|0)\},
$
\end{itemize}
which is therefore equivalent also to the systems of functions  \textbf{\emph{(a)}} and \textbf{\emph{(b)}}.
\\

More precisely, when $T_1,...,T_r$ are exchangeable lifetimes, then, with the above notation, Eq.s~\eqref{barra-G-con.LAMBA-1},~\eqref{CapDAil-min} and~\eqref{CapDAntibes-min} can be rewritten as
	\begin{equation}\label{cor:barra-G-con.LAMBA-1}
	\overline{G}(t)=\exp\left\{-\int_0^t \mu^{[1]}(s|0)\, ds \right\},  \quad t>0,
 \end{equation}
\begin{equation}
	\delta_{d}(u)=\exp\left\{ -d\,\int_{0}^{\overline{G}^{-1}(u)}\mu^{[d]}(s|0)\,ds\right\}, \quad u\in [0,1], \; d=2,...,r, \label{cor:CapDAil-min}%
\end{equation}
and
\begin{equation}
	\overline{G}_{\ell:r}(t)=\sum_{h=r-\ell+1}^{r}\left(  -1\right)  ^{h-r-1+\ell}%
	\binom{r}{h}\binom{h-1}{r-\ell}\exp\left\{  -h\,\int_{0}^{t}\mu^{[h]}(s|0)\,ds\right\}. \label{in-cor:CapDAntibes-min}%
\end{equation}

Similarly Eq.s~\eqref{Sorrento-min} and~\eqref{Palinuro-min} take the form
\begin{equation}
	\mu^{[\ell]}(t|0)=-\frac{1}{\ell}\,\frac{d}{dt}\log\left[  \delta
	_{\ell}(\overline{G}(t))\right], \quad t> 0, \quad  \ell=1,2,...,r, \label{cor:Sorrento-min}%
\end{equation}
		
\begin{align}\notag
	\mu^{[\ell]}(t|0)&=
\frac{1}{\ell}\,\frac{\sum_{h=\ell}^{r}   (h)_\ell\, \big(g_{r-h+1:r}(t)- g_{r-h:r}(t)\big)
}{\sum_{h=\ell}^{r}  (h)_\ell\, \Big(\overline{G}_{r-h+1:r}(t)- \overline{G}_{r-h:r}(t)\Big)}
\\
&=	\frac{1}{\ell}\, \frac{\sum_{k=1}^{r-(\ell-1)}(r-k)_{\ell-1}\, g_{k:r}(t)
}{\sum_{k=1}^{r-(\ell-1)} (r-k)_{\ell-1} \, \overline{G}_{k:r}(t)}, \quad t>0, \quad \ell=1,...,r.
	\label{cor:Palinuro-min}%
\end{align}

%%%%%%SECTION%%%%%%%%%%%%%%%%%%%%%%%%%%%%%%%%

\section{Special cases}\label{sec:special}

The arguments developed in the previous sections will now be illustrated by
considering the two remarkable classes of models respectively defined by
Archimedean copulas and by multivariate hazard rate functions satisfying the
load-sharing condition. These choices in a sense correspond to the simplest
possible forms admitted in the two types of descriptions of a joint
distribution for lifetimes, respectively.

In particular, the analysis of these classes will  allow us to obtain some examples of
application for some of the results derived above, by showing the special form
taken by related formulas.
The arguments in Section~\ref{subsec:THLS}  also permit to pave the way for a better
understanding of the differences between the cases when lifetimes are
exchangeable or minimally stable and between non-order dependent and strictly
order dependent load sharing models. Furthermore we will be in a position to present
some heuristic ideas at the basis of the construction of minimally stable, but
non-exchangeable, multivariate models.

\bigskip

\subsection{Archimedean Copulas}${}$ \label{subsec:Archimedean}\\

 Let us consider  the case when the survival copula~$K$ is Archimedean with generator~$\psi$,
i.e., when $K=C_\psi$, with
$$
C_\psi(u_1,...,u_r):=\psi^{-1}\big(\psi(u_1)+\cdots +\psi(u_r) \big).
$$
We recall that the inverse function $\psi^{-1}$ has to be $r$-monotonic (see Theorem~6.3.6 in Schweizer and Sklar (1983)~\cite{Schweizer-Sklar-1983}, see also Nelsen~(2006~\cite{Nelsen-2006}).
 If  $\psi$ is also a strict generator, i.e., besides being a decreasing, convex function such $\psi(1)=0$,  it is also such that $\psi(0^+)=\infty$,
 then by definition (see Eq.~\eqref{def:delta-ell}) the sub-diagonals  assume the form
$$
\delta_\ell(u)=\psi^{-1}(\ell\psi(u)), \quad 2\leq \ell\leq r.
$$

When the common marginal survival function  $\overline{G}$ is given, then $T_1,...,T_r$ is an exchangeable model,
and   by~\eqref{OrdStatEtDiagonals} we get
$$
\overline{G}_{\ell:n}(t)=\sum_{h=r-\ell+1}^{r}(-1)^{h-r-1+\ell}\binom{r}{h}\,\binom{h-1}{r-\ell}\,\psi^{-1}(h\psi(\overline{G}(t))), \quad 1\leq \ell \leq r.
$$

Furthermore, by Eq.~\eqref{cor:Sorrento-min} one can get the m.c.h.r.\@ functions
\begin{align}\label{mu-ell-t-0-Archimedean}
\mu^{[\ell]}(t|0)&=- \frac{1}{\ell}\,\frac{d}{dt}\log\left[\psi^{-1}\big(\ell \psi (\overline{G}(t))\big)\right]
\\ & =  \frac{1}{\psi^\prime\Big(\psi^{-1}\big(\ell\psi ( \overline{G}( t))\big)\Big)} \, \psi^\prime \big(\overline{G}( t) \big) \, g( t)
\end{align}

Conversely, when the survival distribution functions  $\overline{G}_{k:r}$ are given
then by Corollary~\ref{cor:delta-da-G-k}
we immediately get the expression of $\psi^{-1}(d\psi(u))$ for any $d=1,...,r$.
In particular, when $d=r$ then
$$
\psi^{-1}(r\psi(u))=\delta_r(u)= \overline{G}_{1:r}\big(\overline{G}^{-1}(u)\big)
$$
and $d=r-1$
\begin{align*}
\psi^{-1}((r-1)\psi(u))=\delta_{r-1}(u)&=
\frac{1}{r} \overline{G}_{2:r}\big(\overline{G}^{-1}(u)\big)+\left(1-\frac{1}{r} \right)\delta_r(u)
\end{align*}

Concerning the problem of identifying the generator $\psi$ starting from the knowledge of the survival functions $\overline{G}_{k:r}$, the following remark can be of help.

\begin{remark}\label{rem:on-uniqueness-of-psi}
It is interesting to point out (see Jaworski~(2009) \cite{Jaworski-2009}) that  in general,  when $r>2$,  there exist infinite generators with the same diagonal section $\delta_r$, wheras    the generator $\psi$ is uniquely determined (up to a  multiplicative constant)  by  the pair $\delta_r$ and~$\delta_{r-1}$.   Unfortunately, the proof of the latter property is not  constructive.  However, when the diagonal $\delta_r$ satisfy the condition $\delta^\prime_r(1^-)=r$, then  Erderly et al.~(2013) \cite{Erdely-et-al-2013}  show that~$\psi$ is uniquely determined (up to a multiplicative constant) by $\delta_r$:
$$
\psi(u)\propto \lim_{m\rightarrow \infty} r^m \big( 1- \delta_r^{-m}(u)\big),
$$
where $\delta_r^{-m}$ is the composition of $\delta_r^{-1}$ with itself $m$ times.
\end{remark}

In particular, in the Schur-constant case, i.e., when
$\overline{G}=\psi^{-1}$, the sub-diagonals $\delta_h$ and the survival functions $\overline{G}_{\ell:n}$
 are determined by the marginal survival function $\overline{G}(t)$:
  $$
  \delta_h(\overline{G}(t))=\psi^{-1}(h\psi(\overline{G}(t)))=\overline{G}(ht)
  $$
$$
\overline{G}_{\ell:n}(t)=\sum_{h=r-\ell+1}^{r}(-1)^{h-r-1+\ell}\binom{r}{h}\,\binom{h-1}{r-\ell}\,\overline{G}(ht), \quad 1\leq \ell \leq r.
$$

Furthemore, in view of the particularly simple form of $\delta_{\ell},$
for $\ell=1,2,...,r,$ Eq.~\eqref{cor:Sorrento-min}
can be rewritten as
\begin{eqnarray}\label{cor:Sorrento-SchurC}
\mu^{[\ell]}(t|0) & = & -\frac{1}{\ell}\,\frac{d}{dt}\log\left[\overline{G}(\ell t)\right]
=\frac{g(\ell t)}{\overline{G}(\ell t)}.
\end{eqnarray}
Note that  for Schur-constant models one could get the above results also directly, taking into account that
$$
\mathbb{P}(T_1 >t_1,..., T_r>t_r)= \overline{G}(t_1+\cdots +t_r),
$$
and therefore
$$
\mathbb{P}(T_1 >t,..., T_\ell>t)=\mathbb{P}(T_1 >t,..., T_\ell>t, T_{\ell+1}>0,...,T_{r}>0)= \overline{G}(\ell\, t).
$$
\begin{example}
Let us consider the Archimedean model with
$$
\psi(u)=  (u^{-\alpha}-1)^{\frac{1}{\beta}},\quad \alpha>0, \, \beta\geq 1
$$
and  $\overline{G}(t)=e^{-t}$.
The inverse function $\psi^{-1}(t)=\frac{1}{(t^\beta+1)^\alpha}$ is a
completely monotonic function, so that
$C_\psi$ is a copula fon any $r\geq 2$.
Then, for any $A\subset [r]$ with $|A|=\ell$, one has
\begin{align*}
\mathbb{P}(T_{1:A}>t)&=\psi^{-1}(\ell\psi (\overline{G}(t)))
\\&=\frac{1}{\left( \left(\ell (e^{t\alpha}-1)^{\frac{1}{\beta}}\right)^\beta+1\right)^\alpha}= \frac{1}{\left(\ell^{\beta} e^{t\alpha}-\ell^{\beta}+1\right)^\alpha}.
\end{align*}
Therefore by \eqref{mu-ell-t-0-Archimedean} and  taking into account that
$$
\psi^\prime(u)=- \frac{\alpha}{\beta} \,
u^{-(\alpha+1)}\, \left( u^{-\alpha}-1\right)^{\frac{1}{\beta}-1},
$$
we get
\begin{align*}
\mu^{[\ell]}(t|0)&=
\frac{\,e^{\alpha t}\, \big(e^{\alpha t}- 1\big)^{\frac{1}{\beta}-1}
}{\left(\ell^{\beta} e^{t\alpha}-\ell^{\beta}+1\right)^{\alpha^2+\alpha}\, \left(\left(\ell^{\beta} e^{t\alpha}-\ell^{\beta}+1\right)^{\alpha^2}-1 \right)^{\frac{1}{\beta}-1}
}.
\end{align*}
In the Schur-constant case with the same generator, i.e., with
$$
\overline{G}(t)=\psi^{-1}(t)=\frac{1}{(t^\beta+1)^\alpha}, \quad g(t)=\alpha\, \beta \frac{t^{\beta-1}}{(t^\beta+1)^{\alpha+1}}
$$
by Eq.~\eqref{cor:Sorrento-SchurC} we get
$$
\mathbb{P}(T_{1:A}>t)=\frac{1}{\big((\ell t)^\beta+1\big)^\alpha},\quad \mu^{[\ell]}(t|0)=\alpha\, \beta \frac{(\ell t)^{\beta-1}}{(\ell t)^\beta+1}.
$$
\end{example}
\bigskip

\subsection{Time homogeneous load-sharing models}\label{subsec:THLS}

Load sharing models are characterized by the condition that the m.c.h.r.\@ functions  depend on current time and on the set  of
failed components at the current time, but do not depend on the   failure time, according to the following definition.

\begin{definition} [Load Sharing Models]
The joint distribution of the random variables $T_1,...,T_r$ is  a  \emph{Load Sharing model}
if it is absolutely continuous, and the m.c.h.r.\@ functions  depend neither on the  failure times nor on the order of failure, i.e.,
for any $k=0,1,...,r-1$,  there exist $\binom{r}{k}\, (r-k)$  functions $v\mapsto\lambda_{j|\{j_1,...,j_k\}}(v)$ such that, for any
 $0< v_1 <\cdots <v_k<v$,
and $(j_1,...,j_k)\in \Pi_k([r])$
$$
\lambda_{j|j_1,...,j_k}(v|v_{1},\cdots,v_{k})= \lambda_{j|\{j_1,...,j_k\}}(v).
$$
When the functions $v\mapsto\lambda_{j|\{j_1,...,j_k\}}(v)=\lambda_{j|\{j_1,...,j_k\}}$ are constant w.r.t.\@ time $v$, then the model $T_1,...,T_r$ is said a \emph{time homogeneous load sharing model} (THLS)
\end{definition}

Load Sharing models are well known and recurrently studied in the reliability literature (see, e.g., Spizzichino (2019) \cite{Spizzichino-2019-Rel},  Rychlik and Spizzichino (2021) \cite{Rychlik-Spizzichino-2021}, and the references cited therein). In particular the joint and marginal distributions of the order statistics has been studied in some details.
For what concerns the special case of exchangeability
see also Kamps (1995) \cite{Kamps-1995}.

 Concerning the above definition, notice that the m.c.h.r.\@ functions \--- which generally depends on the set of the failure units \--- are required to be independent of their failure times order.
It is interesting here to extend such a definition to a generalized class of models in which instead also the order of failure times may influence the m.c.h.r.\@ functions.

 Actually some of the existing results on LS models can be easily extended to this class.

\begin{definition} [order dependent Load Sharing Models]
The joint distribution of the random variables $T_1,...,T_r$ is  an  \emph{order dependent Load Sharing model}
if it is absolutely continuous,
 and the m.c.h.r.\@ functions do not depend on the  failure times, i.e., for any  $k=0,1,...,r-1$, there exist $\binom{r}{k} k! (r-k)$   functions $v\mapsto\lambda_{j|j_1,...,j_k}(v)$ such that, for any  $0< v_1 <\cdots <v_k<v$,
and $(j_1,...,j_k)\in \Pi_k([r])$
$$
\lambda_{j|j_1,...,j_k}(v|v_{1},\cdots,v_{k})= \lambda_{j|j_1,...,j_k}(v).
$$
When the functions $v\mapsto\lambda_{j|j_1,...,j_k}(v)=\lambda_{j|j_1,...,j_k}$ are constant w.r.t.\@ time $v$, then the model $T_1,...,T_r$ is said an \emph{order dependent time homogeneous load sharing model} (odTHLS)

A Load Sharing model is also an order dependent Load Sharing model.  We will say that a model is a \emph{strictly order dependent Load Sharing model} when
the m.c.h.r.\@ functions do depend on the order.
\end{definition}

 From an engineering-oriented viewpoint,
  strictly order-dependent load-sharing models do not
   seem very significant for applications in the field of reliability. However models with this property can come out in a natural way, from a mathematical standpoint. Furthermore,  they may emerge in different ways when showing the general interest of load-sharing models within the family of all the joint absolutely continuous probability distributions for lifetimes,
   as shown in De Santis, Spizzichino (2021) \cite{DeSantis-Spizzichino-2021} in the analysis of aggregation paradoxes. See also Example~\ref{example:r=3-uniform-frailty}, where    the condition of load sharing must be limited to strict order dependent THLS models on the purpose of finding minimally stable models which satisfy  specific symmetry properties without falling in the exchangeable case.

\bigskip

\noindent\textbf{Exchangeable THLS models.}\;
We first analyze the special case of exchangeability.
An exchangeable load sharing model clearly cannot be strictly order dependent, in that its m.c.h.r.\@ functions are such that
$\mu(t|k;t_{1},\ldots,t_{k})= \mu(t|k)$.
Furthermore it is time homogeneous if and only if  for any $k=0,1,..,r-1$ there exists a constant   $L(r-k)$  such that
\begin{align}\label{THLS-EX}
\mu(k)=\frac{L(r-k)}{r-k}, \; \forall\; j \in[r], \quad\text{and} \quad  \Lambda_{j_1,...,j_k}(t)= L(r-k).
\end{align}
In such a case it is easily seen  that
$$
\overline{G}_{k:r}(t)=\mathbb{P}\big( \tfrac{X_0}{L(r)}+\tfrac{X_1}{L(r-1)}+\cdots +\tfrac{X_{k-1}}{L(r-(k-1))}>t\big)
$$
where  $X_i\sim EXP(1)$, $i=0,1,2...,r-1$, are  indipendent random variables (see in particular Spiz\-zi\-chi\-no (2019) \cite{Spizzichino-2019-Rel},
Kamps (1995) \cite{Kamps-1995}, Cramer and Kamps (2003) \cite{Cramer-Kamps-2003}, and  references therein).
In other words, for any $k=1,..,r$, the distribution  of $T_{k:r}$ coincides with the distribution of  the sum of $k$ independent exponential distributions of parameters $\gamma_1=L(r),....,\gamma_k=L(r-(k-1))$. In the literature such a distribution is known  as Hyperexponential distribution (see Cramer and Kamps (2003) \cite{Cramer-Kamps-2003} and  references therein): for a fixed vector $\boldsymbol{\gamma}=(\gamma_1,...,\gamma_r)\in \mathbb{R}_+^r$,
$$
\overline{G}^{\boldsymbol{\gamma}}_k(t):=\mathbb{P}\Big( \sum_{j=1}^k \tfrac{Y_j}{\gamma_j}>t\Big)
$$
for $Y_1,...,Y_r$,  independent and standard exponential random variables.

When $\boldsymbol{\gamma}=(\gamma_1,...,\gamma_r)$ is
such that $\gamma_i\neq \gamma_j$ for all $i\neq j$,    the survival function and the probability density  are respectively given by
\begin{align*}
\overline{G}^{\boldsymbol{\gamma}}_k(t)&
 =
 \sum_{j=1}^k \left( \prod_{h\neq j}^{1,k} \frac{\gamma_h}{\gamma_h -\gamma_j}\right)\, e^{-\gamma_j t},
\end{align*}
and
$$
g^{\boldsymbol{\gamma}}_k(t)=
 \sum_{j=1}^k \left( \prod_{h\neq j}^{1,k} \frac{\gamma_h}{\gamma_h -\gamma_j}\right)\,\gamma_j e^{-\gamma_j t}.
$$

Therefore, denoting by $\boldsymbol{L}$ the vector
$$
\boldsymbol{L}:= \big(L(r),L(r-1),...,L(1) \big),
$$
we can write
$$
\overline{G}_{k:r}(t)= \overline{G}^{\boldsymbol{L}}_k(t).
$$
Furthermore, on the one hand formula~\eqref{bar-G-con-bar-G-k-r} takes the special form
\begin{align}\label{bar-G-EX-THLS}
\overline{G}(t)&=
\frac{1}{r} \sum_{k=1}^r \mathbb{P}\big( \tfrac{X_0}{L(r)}+\tfrac{X_1}{L(r-1)}+\cdots +\tfrac{X_{k-1}}{L(r-(k-1))}>t\big)
=
\frac{1}{r} \sum_{k=1}^r \overline{G}_k^{\boldsymbol{L}}(t).
\end{align}
On the other hand, taking into account~\eqref{prob-surv-min-on-J},,
for any $A\subset [r]$, with $|A|=d$, one has
\begin{align}\notag
\mathbb{P}\big(T_{1:A}>t\big)&=\delta_{d}\big(\overline{G}(t)\big)= \frac{d}{(r)_{d}}\,\sum_{k=1}^{r-d+1} (r-k)_{d-1} \,
\mathbb{P}\big( \tfrac{X_0}{L(r)}+\tfrac{X_1}{L(r-1)}+\cdots +\tfrac{X_{k-1}}{L(r-(k-1))}>t\big)
\\  \label{min-G-EX-THLS}
&=\frac{d}{(r)_{d}}\,\sum_{k=1}^{r-d+1} (r-k)_{d-1} \,
\overline{G}_k^{\boldsymbol{L}}(t)
\end{align}
and consequently, recalling the notation in~\eqref{d-mu[d]},~\eqref{cor:Palinuro-min} becomes
\begin{align}\label{mu-d-EX-THLS}
\mu^{[d]}(t|0)&=\frac{1}{d}\, \frac{\sum_{k=1}^{r-d+1} (r-k)_{d-1} \,
g_k^{\boldsymbol{L}}(t)}{\sum_{k=1}^{r-d+1} (r-k)_{d-1} \,
\overline{G}_k^{\boldsymbol{L}}(t)}
\end{align}

 In particular, assuming that $L(i)\neq L(j)$ for $i\neq j$, and setting
$$
\vartheta^{\boldsymbol{L}}_{\ell, k}:=  \prod_{h\neq \ell}^{0, k-1} \frac{L(r-h)}{L(r-h) -L(r-\ell)},
$$
  one has
\begin{align*}
\overline{G}(t)
&=
\frac{1}{r} \sum_{k=1}^r \sum_{j=1}^k \left( \prod_{h\neq j}^{1, k} \frac{L(r-(h-1))}{L(r-(h-1)) -L(r-(j-1))}\right)\, e^{-L(r-(j-1)) t}
\\&=
\frac{1}{r} \sum_{\ell=0}^{r-1} \left(\sum_{k=\ell+1}^{r} \vartheta^{\boldsymbol{L}}_{\ell, k}\right)\, e^{-L(r-\ell) t},
\\
\mathbb{P}\big(T_{1:A}>t\big)&=
 \frac{d}{(r)_{d}}\,\sum_{\ell=0}^{r-d}\left(\sum_{k=\ell+1}^{r-d+1} (r-k)_{d-1}\vartheta^{\boldsymbol{L}}_{\ell, k}\right)\, e^{-L(r-\ell) t}
\intertext{and}
\mu^{[d]}(t|0)&
=\frac{1}{d}\, \frac{\sum_{\ell=0}^{r-d}\left(\sum_{k=\ell+1}^{r-d+1} (r-k)_{d-1}\, \vartheta^{\boldsymbol{L}}_{\ell, k}\right)\;L(r-\ell) \,e^{-L(r-\ell) t}}{\sum_{\ell=0}^{r-d}\left(\sum_{k=\ell+1}^{r-d+1} (r-k)_{d-1}\vartheta^{\boldsymbol{L}}_{\ell, k}\right)\, e^{-L(r-\ell) t}}
\end{align*}

Note that, when $d=r$ then we  obviously get that the function $t \mapsto\mu^{[r]}(t|0)$ is constant and
$\mu^{[r]}(t|0)=\frac{1}{r}\,L(r)$, whereas for $d<r$ the function $t \mapsto\mu^{[d]}(t|0)$ is not constant. This fact is related to the circumstance that the $d$-dimensional marginal distributions of a load sharing model is generally not load sharing.

\bigskip

Before passing to the non-exchangeable case, we observe that in the present THLS exchangeable case
the functions~$\Psi(t;[r],\mathbf{j})$  defined in~\eqref{def:Psi-t-n-j} and~\eqref{Psi_xeqdiff} can be explicitly computed:
\begin{align}\notag
        \Psi(t;[r],\mathbf{j})&=\mathbb{P}\big(T_{j_1}<T_{j_2}<\cdots <T_{j_d}\leq t<T_i ,\; \forall i\notin\{j_1,...,j_d\} \big)
        \\\notag
      &= \frac{1}{d!} \, \mathbb{P}\big(T_{j}\leq t<T_i ,\; \forall\, j\in \{j_1,...,j_d\},\, \text{and}\, \forall\,i\notin\{j_1,...,j_d\} \big)
        \\\label{Psi-t-r-j-LS-EX}
        &= \frac{1}{d!}\,\frac{1}{\binom{r}{d}}\,\mathbb{P}\big(N(t)=d \big)= \frac{1}{(r)_d}\, \big[\overline{G}^{\boldsymbol{L}}_{d+1}(t)-\overline{G}^{\boldsymbol{L}}_{d}(t)\big]
        \end{align}
    Expression~\eqref{Psi-t-r-j-LS-EX}    turns out to be useful also in the analysis of minimally stable conditions.
Indeed for any odTHLS model
\begin{align}\notag
        \Psi(t;[r],\mathbf{j})&=\mathbb{P}\big(T_{j_1}<T_{j_2}<\cdots <T_{j_d}\leq t<T_i ,\; \forall i\notin\{j_1,...,j_d\} \big)
    \\    \notag  & =
        \prod_{\ell=1}^{d}\lambda_{j_{\ell}|j_1,....,j_{\ell-1}}
        \cdot   \int_{0}^{t}ds_d\int_{0}^{s_d}ds_{d-1}\cdots\int_{0}^{s_2}ds_{1}
        \\
        & {}\qquad \qquad \qquad \qquad \quad \;\left[
        e^{-(t-s_{d})\Lambda_{j_1,....,j_{d}}}
        \prod_{\ell=1}^{d}
        e^{-(s_{\ell}-s_{\ell-1})\Lambda_{j_1,....,j_{\ell-1}}}
        \right],\label{Psi-r-j-THLS-1}
        \intertext{and therefore, for $\boldsymbol{\Lambda}=\big(\Lambda_\emptyset,  \Lambda_{j_1},...,\Lambda_{j_1,...,j_d}\big)$, by~\eqref{Psi-t-r-j-LS-EX},  one gets} \Psi(t;[r],\mathbf{j})&=\prod_{\ell=1}^{d}\frac{\lambda_{j_{\ell}|j_1,....,j_{\ell-1}}}{\Lambda_{j_1,....,j_{\ell-1}}}\, \big[\overline{G}_{d+1}^{\boldsymbol{\Lambda}}(t)-\overline{G}_{d}^{\boldsymbol{\Lambda}}(t) \big],\label{Psi-r-j-THLS-LAMBDA}
\end{align}

\bigskip

Before continuing we  exhibit an example of non-exchangeable random lifetimes which are minimally stable.

\begin{example}\label{example-FABIO}
Let us take $r=3$ and consider the non-negative random variables  $T_{1}$, $T_{2}$, $T_{3}$ whose joint
distribution is given in terms of the  m.c.h.r.\@ functions as follows
\begin{align*}
&\lambda_{1|\emptyset}(t)=\lambda_{2|\emptyset}(t)=\lambda_{3|\emptyset}(t)=\frac{1}{3},
\\&
\lambda_{3|1}(t)=\gamma,\; \lambda_{2|1}(t)=1-\gamma,
\quad
\lambda_{1|2}(t)=\gamma,\; \lambda_{3|2}=1-\gamma
\quad\lambda_{2|3}(t)=\gamma,\;\lambda_{1|3}(t)=1-\gamma,
\intertext{for a fixed value  $\gamma\in\left(  \frac{1}{2},1\right) $
and finally}
&\lambda_{1|2,3}(t)=\lambda_{1|3,2}(t)=\lambda_{2|1,3}(t)=\lambda_{2|3,1}(t)=\lambda_{3|1,2}(t)=\lambda_{3|2,1}(t)=2.
\end{align*}

Thus  $T_{1},T_{2},T_{3}$ is a non-exchangeable THLS model: indeed, for instance,
one has
\[
%\frac{(1/3)(1-\gamma) 2}{2}=
\frac{1-\gamma}{3}=\mathbb{P}(T_{1}<T_{2}<T_{3})<\mathbb{P}(T_{1}<T_{3}<T_{2})=\frac{\gamma}{3} .
\]
The above inequality is implied by  the following observations:
\\
(i)\quad   for THLS models one has  (see, e.g., Spizzichino (2019) \cite{Spizzichino-2019-Rel})
$$
\mathbb{P}(T_{j_1}<T_{j_2}<T_{j_3})= \frac{\lambda_{j_1|\emptyset}}{\Lambda_\emptyset}\, \frac{\lambda_{j_2|j_1}}{\Lambda_{j_1}}\,\frac{\lambda_{j_3|j_1,j_2}}{\Lambda_{j_1, j_2}},
$$
\vspace{3mm}
\\ (ii) \quad
$
\Lambda_\emptyset=\Lambda_{j_1}=1, \quad \Lambda_{j_1,j_2}=2, \quad \text{for any $j_1, j_2 \in \{1,2,3\}$, $j_2\neq j_1$}.
$
\medskip

\noindent However the random variables $T_{1},T_{2},T_{3}$ are minimally stable.
Indeed by the previous observation (ii), and by Eq.~\eqref{Psi-r-j-THLS-1},  one has:
\\for any $j_1\in\{1,2,3\}$,
\begin{align*}
\Psi\big(t;[3],j_1\big)&= \mathbb{P}\big(T_{j_1}\leq t, T_{j}>t, j\neq j_1 \big)
\\
&= \int_0^t \lambda_{j_1|\emptyset} e^{-s\Lambda_{\emptyset}} \, e^{-(t-s)\Lambda_{j_1} }\, ds
=\int_0^t \frac{1}{3}\, e^{-s} \, e^{-(t-s)} \, ds= \frac{1}{3} \, t\, e^{-t};
\end{align*}
for any $(j_1,j_2)\in \Pi_2([3])$,
\begin{align*}
&\Psi\big(t;[3],(j_1,j_2)\big)= \mathbb{P}\big(T_{j_1}\leq T_{j_2}\leq t, T_{j_3}>t \big)
\\&= \int_0^t\, ds\,\int_0^s\, ds^\prime \, e^{-(t-s)\Lambda_{j_1,j_2}}\, \lambda_{j_1|\emptyset}\lambda_{j_2|j_1}\ e^{-s^\prime\Lambda_{\emptyset}} \, e^{-(s-s^\prime)\Lambda_{j_1}} \, ds
%\\&=\frac{1}{3}\, \lambda_{j_2|j_1}\, \int_0^t\, ds\,\int_0^s\, ds^\prime \, e^{-2(t-s)}\, \ e^{-s^\prime} \, e^{-(s-s^\prime)}
\\&= \frac{1}{3}\,\lambda_{j_2|j_1} \, e^{-2t} \,\int_0^t \, s\,e^{s} \, ds= \frac{1}{3}\, \lambda_{j_2|j_1} \, \big( e^{-t}t-e^{-t}+e^{-2t} \big).
%%%= \lambda_{j_2|j_1} \, e^{-2t} \, \big(1- e^t- \frac{t}{2!}\,e^{t} \big)
\end{align*}
Taking into account that, for any $(j_1,j_2)$
$$
\lambda_{j_2|j_1} + \lambda_{j_1|j_2}=\gamma + (1-\gamma)=1
$$
we may apply Proposition~\ref{prop:caratt-min-stable}  to conclude that $T_1$, $T_2$, $T_3$ are minimally stable, and  that
$$
\mathbb{P}\big(T_{j_1}\leq t, T_{j_2}\leq t, T_{j_3}>t \big)= \frac{1}{3}\,  \big( e^{-t}t-e^{-t}+e^{-2t} \big), \quad \text{for any $(j_1,j_2,j_3)\in \Pi\big([3]\big)$.}
$$
\end{example}

\begin{example} \label{example-FABIO-BIS}
We again consider the model in the previous example, for which  $\Lambda_{\emptyset}=L(3)=1$, $\Lambda_{j_1}=L(2)=1$ and $\Lambda_{j_1, j_2}=L(1)=2$ (see (ii) therein), and compare it with
 the exchangeable THLS model \eqref{THLS-EX} such that
$$
L(3)=1, \; L(2)=1, \; L(1)=2, \quad  \text{or equivalently} \quad \mu(0)=\frac{1}{3},\; \mu(2)=\frac{1}{2},\; \mu(3)=2.
$$

 For the two models it turns out  that the family of the marginal survival functions of the order statistics coincide. We can see however that also their joint distributions
coincide. Actually the latter circumstance is a consequence of the condition that the functions $(j_1,...,j_k) \mapsto \Lambda_{j_1,...,j_k}$  are constant, only depending on $k$ (see condition \eqref{mathcal-L-singleton} in Example~\ref{example:matcal-L-singelton} and Remark~\ref{rem:mathcal-L-singleton} below).
\medskip

For  the minimally stable random variables $T_1$, $T_2$, $T_3$,  we now compute explicitly  the following survival functions
\begin{align*}
\mathbb{P}(T_{1}>t,T_{2}>t,T_{3}>t)&=e^{-t};
\\
\mathbb{P}(T_{1}>t, T_{2}>t)&= \mathbb{P}(T_1>t, T_2>t, T_3>t)+\mathbb{P}( T_{1}>t, T_{2}>t, T_{3}\leq t)
\\&= e^{-t} +  \frac{1}{3}\,t\, e^{-t}
= e^{-t}\,\Big(1+\frac{t}{3}\Big),
\\
\overline{G}(t)=\mathbb{P}(T_{1}>t) & =\mathbb{P}(T_1>t, T_2>t, T_3>t)+\mathbb{P}(T_{1}> t, T_{2}> t, T_{3}\leq t)
\\&{}\;+\mathbb{P}(T_{1}> t, T_{3}> t, T_{2}\leq t) +\mathbb{P}(T_{1}> t, T_{2}\leq t, T_{3}\leq t)
\\&= e^{-t}+ 2\,\frac{t}{3}\,e^{-t} +   \frac{1}{3}\,\big(t\,e^{-t}-e^{-t}+e^{-2t}\big)= \frac{2}{3}\, e^{-t} + t\, e^{-t} + \frac {1}{3} \, e^{-2t}
\end{align*}
Furthermore, taking into account that $\overline{G}_{1:r}(t)=e^{-\int_0^t\Lambda_\emptyset(s)\,ds}=\mathbb{P}(N(t)=0)$, and
$$
\overline{G}_{k:r}(t)=\mathbb{P}(N(t)\leq k-1), \quad \mathbb{P}(N(t)= h)= \binom{r}{h}\, \Psi\big(t;[r],(1,2,..,h)\big), \quad 1\leq h \leq k,
$$
we get
\begin{align*}
\overline{G}_{1:3}(t)=e^{-\Lambda_{\emptyset}t }=e^{-t},
\quad
\overline{G}_{2:3}(t)=e^{-t} \, (1+ t),
\quad
\overline{G}_{3:3}(t)=2\,e^{-t} \, t + e^{-2t}.
\end{align*}

\end{example}

\bigskip

\noindent\textbf{Minimally stable odTHLS models.\,}
We now pass to considering general properties of minimally stable  odTHLS models.  We start with a simple necessary condition for minimal stability.
\begin{lemma}\label{lemma:CN-ID+DD-THLS}
Let $T_1,...,T_r$ be an odTHLS model.   If  $T_1,...,T_r$ are  minimally stable then necessarily
$$
\lambda_{i|\emptyset}= \frac{\Lambda_{\emptyset}}{r}\quad \forall \, i \in [r]
$$
and
$$
\Lambda_{i}= \Lambda_1, \quad \forall \, i \in [r]
$$
\end{lemma}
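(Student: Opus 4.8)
The plan is to exploit the characterization of minimal stability given in Proposition~\ref{prop:caratt-min-stable}, specialized to the smallest nontrivial case, namely singletons. Taking $A=\{i\}$ and $B=\{1\}$ in~\eqref{cond:caratt-min-stable-short} (here $|A|=|B|=1\le r-1$), and noting that $\Pi(\{i\})=\{(i)\}$, minimal stability forces
\begin{equation*}
\Psi(t;[r],(i))=\Psi(t;[r],(1)),\qquad \forall\, i\in[r],\ \forall\, t>0.
\end{equation*}
For an odTHLS model all m.c.h.r.\@ functions are constants, so the $d=1$ instance of~\eqref{Psi-r-j-THLS-1} reads
\begin{equation*}
\Psi(t;[r],(i))=\lambda_{i|\emptyset}\int_0^t e^{-s\Lambda_\emptyset}\,e^{-(t-s)\Lambda_i}\,ds .
\end{equation*}
The whole argument is then extracted from the behaviour of these (entire) functions of $t$ near $t=0$.

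Next I would Taylor-expand the integral about $t=0$. Writing $e^{-s\Lambda_\emptyset}=1-s\Lambda_\emptyset+O(s^2)$ and $e^{-(t-s)\Lambda_i}=1-(t-s)\Lambda_i+O((t-s)^2)$ and integrating term by term gives
\begin{equation*}
\Psi(t;[r],(i))=\lambda_{i|\emptyset}\,t-\tfrac12\,\lambda_{i|\emptyset}\,(\Lambda_\emptyset+\Lambda_i)\,t^2+O(t^3).
\end{equation*}
Since two entire functions that coincide on $(0,\infty)$ must have identical Taylor coefficients at $0$, the identity $\Psi(t;[r],(i))=\Psi(t;[r],(1))$ produces one scalar equation per order, and matching the first two orders already suffices.

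Finally I would match coefficients. The order-$t$ term gives $\lambda_{i|\emptyset}=\lambda_{1|\emptyset}$ for every $i$; summing over $i\in[r]$ and using $\sum_{i}\lambda_{i|\emptyset}=\Lambda_\emptyset$ yields the first assertion $\lambda_{i|\emptyset}=\Lambda_\emptyset/r$. Feeding this back into the order-$t^2$ term, the common nonzero factor $\lambda_{i|\emptyset}=\Lambda_\emptyset/r$ cancels and leaves $\Lambda_\emptyset+\Lambda_i=\Lambda_\emptyset+\Lambda_1$, i.e.\@ $\Lambda_i=\Lambda_1$, which is the second assertion. The only point requiring care, and the one place where the hypotheses genuinely enter, is the nondegeneracy $\Lambda_\emptyset>0$ (guaranteed by absolute continuity, since otherwise $T_{1:r}=+\infty$ a.s.), which is exactly what licenses the cancellation in the second step; everything else is a routine expansion. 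An alternative to the short-time expansion would be to use the closed form $\Psi(t;[r],(i))=\frac{\lambda_{i|\emptyset}}{\Lambda_\emptyset-\Lambda_i}\big(e^{-t\Lambda_i}-e^{-t\Lambda_\emptyset}\big)$ together with linear independence of exponentials, but that forces an awkward case split according to whether the various $\Lambda_i$ coincide, whereas the Taylor argument handles all cases uniformly.
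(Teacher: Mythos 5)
Your proof is correct, and it diverges from the paper's at the decisive step. Both arguments start from the same place: minimal stability forces $\Psi(t;[r],(i))=\Psi(t;[r],(1))$ for all $i$ and $t$, where $\Psi(t;[r],(i))=\lambda_{i|\emptyset}\int_0^t e^{-s\Lambda_\emptyset}e^{-(t-s)\Lambda_i}\,ds$ (the paper phrases this via condition \textbf{(ii)} of Proposition~\ref{prop:equivalent-properties} rather than via Proposition~\ref{prop:caratt-min-stable}, but these are the same identity). The paper then evaluates the integral in closed form and runs exactly the argument you relegate to your final sentence: it splits into the cases $\Lambda_i=\Lambda_\emptyset$ (giving a $t e^{-\Lambda_\emptyset t}$ profile) and $\Lambda_i\neq\Lambda_\emptyset$ (giving $\lambda_{i|\emptyset}\,\frac{e^{-\Lambda_i t}-e^{-\Lambda_\emptyset t}}{\Lambda_\emptyset-\Lambda_i}$), first uses the difference in functional shape to show the two cases cannot mix across indices, and then invokes linear independence of the exponentials $t\mapsto e^{at}$. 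Your Taylor expansion at $t=0^+$ — coefficient of $t$ gives $\lambda_{i|\emptyset}=\lambda_{1|\emptyset}$, coefficient of $t^2$ then gives $\Lambda_i=\Lambda_1$ — reaches the same conclusion uniformly, with no case split, and your expansion and the justification for matching coefficients (the functions are entire) are both sound. You are also right that the only hypothesis doing real work is $\Lambda_\emptyset>0$, which licenses the cancellation of $\lambda_{i|\emptyset}=\Lambda_\emptyset/r$ in the second-order equation; the paper's route needs the same nondegeneracy, just less visibly. What the paper's longer computation buys is the explicit closed forms for $\mathbb{P}(T_i\leq t,\,T_j>t\ \forall j\neq i)$, which it reuses later (e.g.\@ in Example~\ref{example:r=3}); what yours buys is brevity and uniformity. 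Either proof is acceptable.
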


\begin{proof}
By Proposition~\ref{prop:equivalent-properties}, we know that for $T_1,...,T_r$  minimally stable,   the probabilities
$\mathbb{P}(T_i\leq t, \,T_j>t, \forall j\neq i)$
necessarily assume the same value for any $i\in [r]$ and any time~$t$.
Taking into account that
$$
\mathbb{P}(T_i\leq t, \,T_j>t, \forall j\neq i)
=\lambda_{i|\emptyset} \int_0^t e^{-\Lambda_\emptyset s} \, e^{-\Lambda_i(t-s)} ds
$$
one immediately gets that
 $$
\mathbb{P}(T_i\leq t, \,T_j>t, \forall j\neq i)
=
\begin{cases}
\lambda_{i|\emptyset}\,\dfrac{ t\,e^{-\Lambda_\emptyset t}}{\Lambda_\emptyset} & \text{ if $\Lambda_i=\Lambda_\emptyset$.}
\\
&
\\
\lambda_{i|\emptyset}\,\dfrac{e^{-\Lambda_i t}- e^{-\Lambda_\emptyset t}}{\Lambda_\emptyset-\Lambda_i}\,
& \text{ if $\Lambda_i\neq \Lambda_\emptyset$,}
\end{cases}
$$
If there exist an $i_0\in [r]$ such that $\Lambda_{i_0}=\Lambda_\emptyset$ then necessarily $\Lambda_i=\Lambda_\emptyset$, for any $i\in [r]$. Consequently, also $\lambda_{i|\emptyset}=\lambda_{i_0|\emptyset}$.

Viceversa if there exist an $i_0\in [r]$ such that $\Lambda_{i_0}\neq \Lambda_\emptyset$, then necessarily $\Lambda_i\neq\Lambda_\emptyset$, for any $i\in [r]$. Furthermore one necessarily has
$$
\lambda_{i|\emptyset}\,\dfrac{e^{-\Lambda_i t}- e^{-\Lambda_\emptyset t}}{\Lambda_\emptyset-\Lambda_i}=\lambda_{1|\emptyset}\,\dfrac{e^{-\Lambda_1 t}- e^{-\Lambda_\emptyset t}}{\Lambda_\emptyset-\Lambda_1}.
$$
Then the thesis follows by the linear independence of the functions $t\mapsto e^{at}$ for different values of $a\in \mathbb{R}$.
\end{proof}

In the next result (see Proposition~\ref{prop:ID+DD-THLS-mixture} below) we show that the survival functions $\overline{G}_{h:r}$ of a minimally stable odTHLS model is a mixture of Hyperexponential distributions.

Before stating formally our result we need to introduce some notation.
Let $T_1,...,T_r$ be an ``order-dependent'' THLS.
For any permutation $\mathbf{j}=(j_1,...,j_r)\in \Pi[r]$ we will write $\boldsymbol{\Lambda}_{j_1,...,j_r}$ to denote the vector
$$
\boldsymbol{\Lambda}_{j_1,...,j_r}:=(\Lambda_\emptyset, \,\Lambda_{j_1}, ...,\Lambda_{j_1,...,j_{k-1}},..., \Lambda_{j_1,...,j_{r-1}}).
$$
We will also use the shorter notation $\boldsymbol{\Lambda}_{\mathbf{j}}$ instead of $\boldsymbol{\Lambda}_{j_1,...,j_r}$.

Then we consider the partition of $\Pi[r]$ generated by the equivalence relation
$$
\mathbf{j} \sim \mathbf{j'} \quad \Leftrightarrow \quad  \boldsymbol{\Lambda}_{j_1,...,j_r}= \boldsymbol{\Lambda}_{j'_1,...,j'_r}.
$$

If we define
\begin{equation}\label{mathcal-L}
\mathcal{L}:=\big\{\boldsymbol{L}: \; \exists\; \mathbf{j}\in \Pi([r])\; \text{ with }\; \boldsymbol{\Lambda}_{\mathbf{j}}=\boldsymbol{L} \big\}
\end{equation}
then each element of the partition may be labeled by the vectors $\boldsymbol{L}\in \mathcal{L}$:
$$
\Pi([r])= \bigcup_{\boldsymbol{L}\in \mathcal{L}} \Pi([r];\boldsymbol{L}),
$$
where
\begin{equation}\label{Pi-r-L}
 \Pi([r];\boldsymbol{L}):=\big\{ \mathbf{j}\in \Pi([r]) \; \text{such that}\; \boldsymbol{\Lambda}_{\mathbf{j}}=\boldsymbol{L} \big\}.
\end{equation}
Clearly the set $\mathcal{L}$ is always finite.

For our purposes it is convenient to label the coordinates of the vectors in $\mathcal{L}$ as follows:
$$
\boldsymbol{L}=\big(L(r), L(r-1),...,L(1)\big).
$$
With this position, in view of Lemma~\ref{lemma:CN-ID+DD-THLS},  when furthermore $T_1,...,T_r$ are minimally stable, then  $L(r)$ assumes the same value for any $\boldsymbol{L}\in \mathcal{L}$,  and the same happens for $L(r-1)$. More precisely one has
$$
L(r)=\Lambda_{\emptyset}, \quad L(r-1)=\Lambda_1= \Lambda_i, \; \forall \, i \in [r].
$$

\begin{proposition}\label{prop:ID+DD-THLS-mixture}
Let $T_1,...,T_r$ be an odTHLS model. Suppose that  the lifetimes $T_1,...,T_r$  are minimally stable. Then, with the notation introduced above, the survival functions $\overline{G}_{h:r}$, $h=1,...,r$, can be obtained as the following mixture of  Hyperexponential survival functions
\begin{equation}
\overline{G}_{h:r}(t)=\sum_{\boldsymbol{L}\in \mathcal{L}} \frac{|\Pi([r]; \boldsymbol{L})|}{r!} \, \overline{G}_{h:r}^{\boldsymbol{L}}(t).
\end{equation}
\end{proposition}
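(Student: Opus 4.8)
The plan is to exploit the Markov jump structure underlying an odTHLS model. Let $\mathbf{J}=(J_1,\dots,J_r)$ be the random permutation recording the order in which the units fail, i.e. $T_{J_1}<\cdots<T_{J_r}$. Since the m.c.h.r.\ functions are constant in time and depend only on the ordered set of already failed units, the ordered failure history is a pure-jump Markov structure, so the standard competing-exponentials argument shows that, conditionally on $\mathbf{J}=\mathbf{j}$, the successive inter-failure times $S_0,\dots,S_{r-1}$ are independent with $S_k\sim\mathrm{Exp}(\Lambda_{j_1,\dots,j_k})$. Hence $T_{h:r}=S_0+\cdots+S_{h-1}$ is, conditionally on $\mathbf{J}=\mathbf{j}$, a Hyperexponential variable whose parameter vector is formed by the first $h$ coordinates of $\boldsymbol{\Lambda}_{\mathbf{j}}$. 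In particular this conditional law depends on $\mathbf{j}$ only through the class label $\boldsymbol{L}=\boldsymbol{\Lambda}_{\mathbf{j}}$, so that $\mathbb{P}(T_{h:r}>t\mid\mathbf{J}=\mathbf{j})=\overline{G}^{\boldsymbol{L}}_{h:r}(t)$.

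First I would condition on $\mathbf{J}$ and group the permutations according to their class, obtaining
\[
\overline{G}_{h:r}(t)=\sum_{\boldsymbol{L}\in\mathcal{L}}\pi(\boldsymbol{L})\,\overline{G}^{\boldsymbol{L}}_{h:r}(t),\qquad \pi(\boldsymbol{L}):=\sum_{\mathbf{j}\in\Pi([r];\boldsymbol{L})}\mathbb{P}(\mathbf{J}=\mathbf{j})=\mathbb{P}(\boldsymbol{\Lambda}_{\mathbf{J}}=\boldsymbol{L}).
\]
It should be stressed that this mixture-of-Hyperexponentials representation holds for \emph{every} odTHLS model, with no symmetry assumption; the only role of minimal stability is to force the class weights to be uniform, namely $\pi(\boldsymbol{L})=|\Pi([r];\boldsymbol{L})|/r!$. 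Proving this last identity is the real content of the statement, and it is where I expect the main difficulty to lie.

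To establish it I would feed the explicit form~\eqref{Psi-r-j-THLS-LAMBDA} of $\Psi(t;[r],\mathbf{j})$ into the characterisation of minimal stability in Proposition~\ref{prop:caratt-min-stable}. For a fixed level $m\le r-1$ and any failed set $A$ with $|A|=m$, one has $\mathbb{P}(D(t)=A)=\sum_{\mathbf{j}\in\Pi(A)}q_m(\mathbf{j})\,\Phi_{\boldsymbol{v}(\mathbf{j})}(t)$, where $D(t)$ is the set of units failed by time $t$, $q_m(\mathbf{j})=\prod_{\ell=1}^{m}\lambda_{j_\ell|j_1,\dots,j_{\ell-1}}/\Lambda_{j_1,\dots,j_{\ell-1}}$ is the jump-chain prefix probability, and $\Phi_{\boldsymbol{v}}(t)=\overline{G}^{\boldsymbol{v}}_{m+1}(t)-\overline{G}^{\boldsymbol{v}}_{m}(t)$ is the occupancy function of level $m$, depending on $\mathbf{j}$ only through the rate history $\boldsymbol{v}(\mathbf{j})=(\Lambda_\emptyset,\dots,\Lambda_{j_1,\dots,j_{m-1}},\Lambda_A)$. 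Minimal stability forces $\mathbb{P}(D(t)=A)$ to be the same for all such $A$ and all $t>0$; matching, on the two sides, the coefficients of the distinct exponential modes $e^{-ct}$ carried by the functions $\Phi_{\boldsymbol{v}}$ converts these functional identities into purely combinatorial equalities among the prefix weights $q_m$ grouped by rate history. Chaining these equalities across the levels $m=1,\dots,r-1$ and summing should finally yield the uniform class weight.

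The hard part will be exactly this last passage. Minimal stability constrains only the sums of $\Psi$ over all orderings of a \emph{fixed} failed set of size at most $r-1$; it says nothing about the full-order probabilities $\mathbb{P}(\mathbf{J}=\mathbf{j})$ individually, which need not be equal, as Example~\ref{example-FABIO} already shows. Turning these set-level constraints into a statement about the rate-classes $\Pi([r];\boldsymbol{L})$ therefore requires care: I would need to justify that the occupancy functions $\Phi_{\boldsymbol{v}}$ can be separated well enough, via the coefficients of their exponential modes, to isolate one rate history at a time, and then organise the resulting level-by-level identities into an induction on $m$. Here Lemma~\ref{lemma:CN-ID+DD-THLS} provides the base, since it already secures the first two coordinates $L(r)=\Lambda_\emptyset$ and $L(r-1)=\Lambda_1$ together with the uniformity $\mathbb{P}(J_1=i)=1/r$ of the first jump; the remaining task is to propagate the uniform weighting through all the subsequent levels.
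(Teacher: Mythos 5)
Your reduction is, in substance, the same as the paper's: the paper symmetrizes $T_1,\dots,T_r$ by a uniformly random permutation and appeals to Remark~\ref{rem:EX-con-delta=min-stable}, which amounts to the conditioning on the failure order $\mathbf{J}$ that you carry out, and both routes land on the representation $\overline{G}_{h:r}(t)=\sum_{\boldsymbol{L}\in\mathcal{L}}\pi(\boldsymbol{L})\,\overline{G}^{\boldsymbol{L}}_{h:r}(t)$ with $\pi(\boldsymbol{L})=\mathbb{P}(\boldsymbol{\Lambda}_{\mathbf{J}}=\boldsymbol{L})=\sum_{\mathbf{j}\in\Pi([r];\boldsymbol{L})}\prod_{\ell=1}^{r}\lambda_{j_{\ell}|j_1,\dots,j_{\ell-1}}\big/\Lambda_{j_1,\dots,j_{\ell-1}}$, which indeed holds for every odTHLS model. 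You are also right that everything then hinges on the identity $\pi(\boldsymbol{L})=|\Pi([r];\boldsymbol{L})|/r!$; the paper's own proof disposes of exactly this step with a ``Clearly''.

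The gap in your proposal is that this identity is never established, and the level-by-level mode-matching you sketch cannot establish it, because minimal stability does not imply it. Concretely, take $r=3$ and the model of Example~\ref{example:r=3} under condition $\textbf{(A3)}^\prime$ with $\gamma_1\neq\gamma_2$: say $\lambda_{j|\emptyset}=1$ and $\Lambda_j=2$ for all $j$, $\lambda_{2|1}=\lambda_{1|3}=\lambda_{3|2}=\gamma_1=3/2$, $\lambda_{1|2}=\lambda_{3|1}=\lambda_{2|3}=\gamma_2=1/2$, with third-stage rate $L^{\prime}(1)=10$ after every $\gamma_1$-transition and $L^{\prime\prime}(1)=20$ after every $\gamma_2$-transition. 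Using \eqref{Psi-r-j-THLS-LAMBDA} one checks \eqref{cond:caratt-min-stable-short} directly (each pair $\{j_1,j_2\}$ contributes one $\gamma_1$-ordering ending in rate $10$ and one $\gamma_2$-ordering ending in rate $20$), so the model is minimally stable, and $\mathcal{L}=\{(3,2,10),(3,2,20)\}$ with $|\Pi([3];\boldsymbol{L})|=3$ for both classes. Yet $\pi\big((3,2,10)\big)=3\cdot\tfrac13\cdot\tfrac{3/2}{2}=\tfrac34$ and $\pi\big((3,2,20)\big)=\tfrac14$, so $\overline{G}_{3:3}=\tfrac34\,\overline{G}^{(3,2,10)}_{3:3}+\tfrac14\,\overline{G}^{(3,2,20)}_{3:3}$, which differs from the equal-weight mixture asserted in the Proposition (compare the coefficients of the modes $e^{-10t}$ and $e^{-20t}$). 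The obstruction is precisely the one you flagged: minimal stability constrains only the set-indexed sums $\sum_{\mathbf{j}\in\Pi(A)}\Psi(t;[r],\mathbf{j})$, and these leave the distribution of the jump-chain prefix probabilities within a rate class, hence the class weights $\pi(\boldsymbol{L})$, undetermined. The combinatorial weights $|\Pi([r];\boldsymbol{L})|/r!$ are recovered only under additional hypotheses (e.g.\ $\mathcal{L}$ a singleton, as in Example~\ref{example-FABIO}, or uniform frailty at every level); in general the correct statement is the mixture with weights $\pi(\boldsymbol{L})$, exactly as in the first half of your argument.
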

Note that
$$
\overline{G}_{k:r}^{\boldsymbol{L}}(t)= \mathbb{P}\big( \tfrac{X_0}{L(r)}+\tfrac{X_1}{L(r-1)}+\cdots +\tfrac{X_{k-1}}{L(r-(k-1))}>t\big), \quad k=1,...,r,
$$
is the survival function of the order statistics of an exchangeable THLS model with
$$
\mu(k)=\frac{L(r-k)}{r-k}.
$$
In the model of Example~\ref{example-FABIO}   the mixture turns out to be degenerate, since the set
$\mathcal{L}$ is the singleton $\{(1,1,2)\}$.  The latter fact explains the reason why  the family $\{\overline{G}_{1:3}, \overline{G}_{2:3},\overline{G}_{3:3}\}$ of that example coincides with the family of the exchangeable THLS model with
$\mu(0)=1/3$, $\mu(2)=1/2$, $\mu(3)=2$.

\begin{proof}[Proof of Proposition~\ref{prop:ID+DD-THLS-mixture}]
Consider a random permutation $\sigma_1,...,\sigma_r$, uniformly distributed in $\Pi([r])$.
Then  $T_{\sigma_1},..., T_{\sigma_r}$  is the symmetrized model  of  $T_1,...,T_r$, denoted by $\widetilde{T}_1,...,\widetilde{T}_r$ in Remark~\ref{rem:EX-con-delta=min-stable}.

 Clearly the joint distribution of $\widetilde{T}_1,...,\widetilde{T}_r$
is the mixture of the exchangeable THLS model with m.h.c.r.\@ functions
$$
\mu(k)=\frac{L(r-k)}{r-k},
$$
and with mixture weights given by $\frac{|\Pi([r]; \boldsymbol{L})|}{r!} $.
Therefore the marginal
  survival functions of the order statistics of the model  $\widetilde{T}_1$,..., $\widetilde{T}_r$ are given by
 $$
\sum_{\boldsymbol{L}\in \mathcal{L}} \frac{|\Pi([r]; \boldsymbol{L})|}{r!} \, \overline{G}_{h:r}^{\boldsymbol{L}}(t).
$$
 Finally the thesis follows   by Remark~\ref{rem:EX-con-delta=min-stable} and  the previous representation of the survival functions of the order statistics of an exchangeable THLS model.
\end{proof}

As a generalization of the arguments presented in Example~\ref{example-FABIO},  we characterize the set of all minimally stable odTHLS  models $T_1$,$T_2$, $T_3$ in terms of the  m.c.h.r.\@ functions.

\begin{example}[Minimally stable odTHLS with $r=3$]\label{example:r=3}

 Let us consider  the odTHLS model with  $T_{1}$, $T_{2}$, $T_{3}$ whose joint
distribution is given in terms of the  m.c.h.r.\@ functions $\lambda_{j_1|\emptyset}$, $\lambda_{j_2|j_1}$, $\lambda_{j_3|j_1,j_2}$, $j_1, j_2, j_3 \in \{1,2,3\}$, $j_2\neq j_1$, $j_3\neq j_1, j_2$.
		\\
We are going to prove that $T_{1}$, $T_{2}$, $T_{3}$ are minimally stable if and only if
conditions \emph{\textbf{(A1)}} and \emph{\textbf{(A2)}} below hold, together  with either  condition \emph{\textbf{(A3)}} or condition \emph{$\textbf{(A3)}^\prime$}, where
%$L(3):=\Lambda_\emptyset$ and $L(2):=\Lambda_1$		
\begin{description}
\item[\textbf{(A1)}] there exists  a value $L(3)$ such that
$$
\lambda_{1|\emptyset}=\lambda_{2|\emptyset}=\lambda_{3|\emptyset}= \dfrac{L(3)}{3};%\dfrac{\Lambda_\emptyset}{3};
$$
\item[\textbf{(A2)}] there exists  a value $L(2)$ such that
$$
\Lambda_1=\lambda_{2|1}+\lambda_{3|1}=\Lambda_2=\lambda_{1|2}+\lambda_{3|2}=\Lambda_3=\lambda_{1|3}+\lambda_{2|3}=L(2);
$$
\item[\textbf{(A3)}]
there exists a value $L(1)$ such that
$$
\lambda_{j_3|j_1,j_2}=L(1), \quad \forall \, (j_1,j_2,j_3)\in \Pi([3]),
$$
and there exist two values $\gamma_1$ and $\gamma_2$ (possibly equal) such that
 \begin{equation}\label{gamma-1-2}
\{\lambda_{j_2|j_1},\lambda_{j_1|j_2}\}= \{\gamma_1, \gamma_2\}, \quad \text{for any $\{j_1,j_2\}$,}
\end{equation}
 and
 \begin{equation}\label{sum-gamma-1-2}
\gamma_1+\gamma_2=L(2);
\end{equation}
\item[$\textbf{(A3)}^\prime$]
 there exist two values $L^\prime(1)\neq L^{\prime\prime}(1)$
such that
$$
\lambda_{j_3|j_1,j_2}\in \big\{L^\prime(1),\, L^{\prime\prime}(1)\big\},\quad \forall\, (j_1,j_2,j_3)\in \Pi([3]),
$$
there exist two values $\gamma_1$ and $\gamma_2$ (possibly equal) such that \eqref{gamma-1-2} and \eqref{sum-gamma-1-2} hold,
and furthermore
$$
\lambda_{j_2|j_1}=\gamma_1,\; \lambda_{j_3|j_1,j_2}=L^\prime(1) \quad \Rightarrow \quad \lambda_{j_1|j_2}=\gamma_2,\;\lambda_{j_3|j_2,j_1}=L^{\prime\prime}(1)
$$
\end{description}
Note that the model is strictly order dependent only under condition $\emph{\textbf{(A3)}}^\prime$.
\\

Conditions  \emph{\textbf{(A1)}} and \emph{\textbf{(A2)}} are the necessary conditions of  Lemma~\ref{lemma:CN-ID+DD-THLS} with $L(3):=\Lambda_\emptyset$ and $L(2):=\Lambda_1$ and guarantee that
the following probabilities  take the same value for any $j_1\in\{1,2,3\}$,
$$
\Psi\big(t;[3],j_1\big)= \mathbb{P}\big(T_{j_1}\leq t, T_{j}>t, j\neq j_1 \big)= \int_0^t \lambda_{j_1|\emptyset} e^{-s\Lambda_{\emptyset}} \, e^{-(t-s)\Lambda_{j_1} }\, ds.
$$
Furthermore 
 for any $(j_1,j_2)\in \Pi_2([3])$, we get
\begin{align*}
\Psi\big(t;[3],(j_1,j_2)\big)&= \mathbb{P}\big(T_{j_1}\leq T_{j_2}\leq t, T_{j_3}>t \big)
\\&= \int_0^t\, ds\,\int_0^s\, ds^\prime \, e^{-(t-s)\Lambda_{j_1,j_2}}\, \lambda_{j_1|\emptyset}\lambda_{j_2|j_1}\, e^{-s^\prime\Lambda_{\emptyset}} \, e^{-\Lambda_{j_1}(s-s^\prime)} \, ds
\intertext{whereas}
\Psi\big(t;[3],(j_2,j_1)\big)&=  \int_0^t\, ds\,\int_0^s\, ds^\prime  \lambda_{j_2|\emptyset}\lambda_{j_1|j_2}\, e^{-\Lambda_{j_1,j_2}(t-s)}\, \ e^{-\Lambda_{\emptyset}s^\prime} \, e^{-\Lambda_{j_2}(s-s^\prime)}.
\end{align*}
Proposition~\ref{prop:caratt-min-stable} guarantees  that $T_1$, $T_2$, $T_3$ are minimally stable  if and only if the following probabilities assume the same value for any $t>0$ and for any $\{j_1,j_2\}\subset\{1,2,3\}$:
\begin{align*}
&\mathbb{P}\big(T_{j_1}\leq t, T_{j_2}\leq t, T_{j_3}>t \big)=\Psi\big(t;[3],(j_1,j_2)\big)+ \Psi\big(t;[3],(j_2,j_1)\big).
\end{align*}
Taking into account the necessary conditions \emph{\textbf{(A1)}} and \emph{\textbf{(A2)}}, and that when $r=3$, then $\Lambda_{j_1,j_2}=\lambda_{j_3|j_1,j_2}$ the previous condition is equivalent to require that
the following sums assume the same value, for any  $t>0$ and any $\{j_1,j_2\} \subset \{1,2,3\}$
\begin{align*}
&\lambda_{j_2|j_1}\,  \, \int_0^t\, ds\,\int_0^s\, ds^\prime \, e^{-\lambda_{j_3|j_1,j_2}(t-s)}\, \ e^{-L(3)s^\prime} \, e^{-L(2)(s-s^\prime)}
\\
&{}\qquad \quad+\lambda_{j_1|j_2}\,  \, \int_0^t\, ds\,\int_0^s\, ds^\prime \, e^{-\lambda_{j_3|j_2,j_1}(t-s)}\, \ e^{-L(3)s^\prime} \, e^{-L(2)(s-s^\prime)}.
\end{align*}

In its turn the above requirement is equivalent to
either condition \emph{\textbf{(A3)}} or $\emph{\textbf{(A3)}}^\prime$.
\end{example}

Among load sharing models, an interesting subclass is the class of the so-called \emph{uniform frailty models}, whose m.c.h.r.\@ functions are such that, for any $k=0,1,2,...,r-1$,
$$
\lambda_{j|j_1,...,j_{k}}=\frac{\Lambda_{j_1,...,j_{k}}}{r-k}, \quad \forall \, (j_1,...,j_{k})\in \Pi_k([r]).
$$
In the next example we analyze the condition of minimal stability for the model of the previous Example~\ref{example:r=3}, under the  additional condition of uniform frailty.

\begin{example}\label{example:r=3-uniform-frailty}
Let us consider the model $T_{1}$, $T_{2}$, $T_{3}$ of the previous Example~\ref{example:r=3}.
If besides the minimal stability condition, we impose the uniform frailty condition, then the model $T_{1}$, $T_{2}$, $T_{3}$ turns out to be non-exchangeable only if it is strictly order dependent.
Indeed  when condition \emph{\textbf{(A3)}} holds,  then   the additional uniform frailty  condition implies that $\gamma_1=\gamma_2=L(2)/2$ and therefore the model is exchangeable: for any $(j_1,j_2,j_3)\in \Pi([3])$
$$
\lambda_{j|\emptyset}=\frac{L(3)}{3},\quad \lambda_{j_2|j_1}=\frac{L(2)}{2}, \quad \lambda_{j_3|j_1,j_2}=L(1).
$$
On the contrary, when condition $\emph{\textbf{(A3)}}^\prime$ holds, the model is strictly order dependent.  Then the uniform frailty  and  the minimal stability conditions together  become: for any $(j_1,j_2,j_3)\in \Pi([3])$
$$
\lambda_{j_1|\emptyset}=\frac{L(3)}{3},  \quad  \lambda_{j_2|j_1}=\frac{L(2)}{2}, \quad  \big\{\lambda_{j_3|j_1,j_2}, \lambda_{j_3|j_2,j_1}\big\}=\big\{L^{\prime}(1),L^{\prime\prime}(1)\big\}.
$$
\end{example}

\begin{example}\label{example:matcal-L-singelton}
Let us
assume that $T_1,..,T_r$, with $r\geq 3$, is an odTHLS model described by the m.c.h.r.\@ functions $\lambda_{j|j_1,...,j_{d-1}}$, $d=1,...,r$, $(j_1,...,j_{d-1},j)\in \Pi([r])$ with the usual convention that when $d=1$ then $\lambda_{j|j_1,...,j_{d-1}}=\lambda_{j|\emptyset}$. We are going to
  characterize all the minimally stable odTHLS models  in the particular
 case when the set $\mathcal{L}$ is the singleton
$\big\{\big( L(r),L(r-1),...,L(1)\big) \big\}$, i.e.,
\begin{equation}\label{mathcal-L-singleton}
\Lambda_{j_1,...,j_k}=\sum_{j\notin \{j_1,...,j_k\}} \lambda_{j|j_1,...,j_k}=L(r-k), \quad \forall \, k=0,1,...,r-1.
\end{equation}
By Proposition~\ref{prop:ID+DD-THLS-mixture}, for all the above minimally stable odTHLS models,  one has  $\overline{G}_{k:r}=\overline{G}^L_{k}$, $k=1,2,..., r$, i.e., the same Hyperexponential  marginal survival functions  of the exchangeable THLS model~\eqref{THLS-EX}. 
 As a consequence, all  minimally stable models share with the latter exchangeable model also
 $(i)$ the  $1$-dimensional marginal survival function  of each $T_i$;
 $(ii)$  the survival function  of each $T_{i:A}$, which are respectively equal to \eqref{bar-G-EX-THLS} and \eqref{min-G-EX-THLS}. Furthermore they share also
  the rates $\Lambda_\emptyset^{[d]}(t)$, for any $d=1,2,...,r$, which are equal to 
 $
 \Lambda_\emptyset^{[d]}(t)= d \mu^{[d]}(t|0)
 $,
 where $\mu^{[d]}(t|0)$ are given in
 \eqref{mu-d-EX-THLS}.
\\

In this case the characterization of the condition that  $T_1,...,T_r$ are  minimally stable is a consequence of Proposition \ref{prop:caratt-min-stable} together with \eqref{Psi-r-j-THLS-LAMBDA}:
 the m.c.h.r.\@ functions satisfy the following system of equations
$$
\begin{cases}
\sum_{j\notin \{j_1,...,j_{d-1}\}}\lambda_{j|j_1,...,j_{d-1}}=L(r-(d-1)),&  {}\quad j_1 \in [r]
\\
&
\\
\sum_{\mathbf{j}\in \Pi(I)} \prod_{h=1}^d
\lambda_{j_{h}|j_{1},...,j_{h-1}}= \frac{1}{\binom{r}{d}}\,
\prod_{\ell=1}^{d}  L(r-(\ell-1)), & \text{for any $I\subset [r]$, with $|I|=d$,}
\\{}\qquad \qquad \qquad \qquad \text{$d=1,...,r$. } &
\end{cases}
$$
This characterization yields that there exist infinitely many minimally stable odTHLS models satisfying condition~\eqref{mathcal-L-singleton}. This statement is a consequence of  the observation
that one can see the previous system as a family of nested linear systems:
\begin{align*}
&
\begin{cases}
\sum_{j\in [r]} \lambda_{j|\emptyset}=L(r),&
\\
\lambda_{j|\emptyset}= \frac{1}{r} \, L(r);& {}\quad j\in [r]
\end{cases}
\intertext{once $\lambda_{j|\emptyset}$ are given, then $\lambda_{j_2|j_1}$ are the solutions $x_{j_1,j_2}$, $j_2\neq j_1$, of}
&\begin{cases}
\sum_{j\neq j_1} \lambda_{j_1|\emptyset}x_{j_1,j} 
=L(r-1),& {}\quad \forall\,j_1 \in [r]
\\
\lambda_{j_1|\emptyset}\, x_{j_1,j_2}+\lambda_{j_2|\emptyset}\, x_{j_2,j_1}
= \frac{1}{\binom{r}{2}} \, L(r)L(r-1);&  {}\quad \forall\,\{j_1,j_2\} \subset [r];
\end{cases}
\intertext{once $\lambda_{j|\emptyset}$ and $\lambda_{j|j_1}$ are given, then $\lambda_{j_3|j_1,j_2}$ are the solutions $x_{j_1,j_2,j_3}$ of}
&\begin{cases}
\sum_{j\notin \{j_1,j_2\}} x_{j_1,j_2,j}
=L(r-2),&  {}\quad \forall\, (j_1, j_2)\in \Pi_2([r]),
\\
\sum_{(j_1,j_2,j_3)\in \Pi(\{k_1,k_2,k_3\}) }\lambda_{j_1|\emptyset}\lambda_{j_2|j_1} x_{j_1,j_2,j_3}&
\\{}\qquad\qquad= \frac{1}{\binom{r}{3}} \, L(r)\,L(r-1) L(r-2);& {}\; \forall\, \{k_1,k_2,k_3\}\subset [r];
\end{cases}
\end{align*}
and so on.
Since the above nested systems always  admit the solutions $x_{j_1,...,j_k,j}=\lambda_{j|j_1,..,j_{k}}
=\frac{L(r-k)}{r-k}$,  then there are  infinite solutions.

\end{example}

\begin{remark}\label{rem:mathcal-L-singleton}
As we are going to show, the above condition~\eqref{mathcal-L-singleton}, i.e., the condition that $\mathcal{L}$ is a singleton, though does not imply minimal stability, it does imply the following condition: for any permutation $(j_1,...,j_r)\in \Pi([r])$,
\begin{equation}\label{weak-exchangeability}
\mathbb{P}\big(T_{k:r}>t | T_{j_1}< T_{j_2}<\cdots < T_{j_r}\big)=\mathbb{P}\big(T_{k:r}>t\big).
\end{equation}
Such a  condition emerges in a natural way even in more general settings beyond load-sharing, as pointed out in Navarro et al.~(2008)~\cite{Navarro-et-al-2008}, where it has been referred to as a condition of \emph{weak exchangeability} (see also Navarro et al.~(2021)~\cite{Navarro-Rychlik-Spizzichino-2021}).
\\ In the case of odTHLS models, condition~\eqref{mathcal-L-singleton} implies an even  stronger property:
\begin{equation}\label{stronger-weak-exchangeability}
\mathbb{P}\big(T_{k:r}>t_k, k=1,2,..,r | T_{j_1}< T_{j_2}<\cdots < T_{j_r}\big)=\mathbb{P}\big(T_{k:r}>t_k, k=1,2,...r\big).
\end{equation}
More precisely the
joint distribution of $\big(T_{1:r}, ..., T_{r:r} \big)$  coincides with the joint distribution of
$$
\left(\frac{Y_0}{L(r)}, \,\frac{Y_0}{L(r)}+\frac{Y_1}{L(r-1)}, \ldots , \, \frac{Y_0}{L(r)}+\frac{Y_1}{L(r-1)}+\cdots  \frac{Y_{r-1}}{L(1)}\right),
$$
where $Y_k$, $k=0,1,...,r-1$, are i.i.d.\@ standard exponential, i.e., the joint distribution of the order statistics of an exchengeable THLS model.

Indeed, one can easily extend Corollary~3 in~Rychlik and Spizzichino~(2021)~\cite{Rychlik-Spizzichino-2021} for THLS models, to odTHLS ones:
for any permutation $(j_1,...,j_r)\in \Pi([r])$,
the conditional joint distribution of $\big(T_{1:r}, ..., T_{r:r} \big)$  given the following event
$$
\big\{T_{j_1}< T_{j_2}<\cdots < T_{j_r}\big\}, \quad
$$
coincides with the law of
$$
\left(\frac{Y_0}{\Lambda_\emptyset}, \,\frac{Y_0}{\Lambda_\emptyset}+\frac{Y_1}{\Lambda_{j_1}}, \ldots , \, \frac{Y_0}{\Lambda_\emptyset}+\frac{Y_1}{\Lambda_{j_1}}+\cdots + \frac{Y_{r-1}}{\Lambda_{j_1,...,j_{r-1}}}\right).
$$

In the frame of load-sharing models, condition~\eqref{mathcal-L-singleton}  also emerges   in De Santis and Spizzichino~(2021)~\cite{DeSantis-Spizzichino-2021}, where it plays an important role for the special type of problems studied therein.
\end{remark}

\bigskip

%%%%%%SECTION%%%%%%%%%%%%%%%%%%%%%%%%%%%%%%%%
\appendix
\section{}
\begin{example}[A procedure to construct a DD, not exchangeable, $n$-dimensional copulas]\label{example-DD-n-copulas}
Our aim is to prove a generalization of Example~\ref{example:C-3-construction}. We start by proving that, given a DD $n-1$-dimensional copula   $C_{n-1}$,
 then it is possible to construct a $n$-dimensional copula $C_n$ which is DD.
  Subsequently we show that by using this construction recursively, starting with a $2$-dimensional not symmetric copula, the copulas $C_n$ are not exchangeable.

Given the DD copula $C_{n-1}$,   we define a  $n$-dimensional copula~$C_n$ as
\begin{align*}
&C_{n}(u_1,...,u_{n-1}, u_n)
\\
:=& \frac{1}{n}\,\big[C_{n-1}(u_{1},...,u_{n-1})\cdot u_n+C_{n-1}(u_{2},...,u_{n-1}, u_{n})\cdot u_1+\cdots
\\&{}\quad \cdots + C_{n-1}(u_{n-1}, u_{n}, u_{1},...,u_{n-3})\cdot u_{n-2}+ C_{n-1}(u_{n},u_{1},...,u_{n-2}) \cdot u_{n-1}\big],
\end{align*}
Namely $C_{n}$ is obtained as the symmetric mixture of the copulas
 over the $n$   cyclic permutations of $(1,2,...,n)$
 $$
 \sigma_1=(1,2,...,n), \;\text{and}\; \sigma_k=(k,k+1,...,n, 1,..,k-1), \quad  2\leq k\leq n,
 $$
\begin{align}\label{def:C_n}
C_{n}(u_1,...,u_{n-1}, u_n):= \frac{1}{n}\, \sum_{k=1}^n  C_{n-1}(u_{\sigma_k(1)},...,u_{\sigma_k(n-1)})\cdot u_{\sigma_k(n)}.
\end{align}
It is easy to see that $C_n$ is DD, with diagonal sections
$$
\delta^{C_n}_{n}(u)=C_n(u,...,u,u)=\delta^{C_{n-1}}_{n-1}(u) \cdot u, \quad \delta^{(C_n)}_1(u)=u
$$
and 
\begin{equation}\label{eq:recursive-delta-d}
\delta^{C_n}_d(u)= \frac{d}{n}\,\delta^{C_{n-1}}_{d-1}(u)\cdot u+ \left(1- \frac{d}{n} \right)\, \delta^{C_{n-1}}_d(u), \quad  2\leq d\leq n-1.
\end{equation}
Indeed when $(u_1,...,u_{n-1}, u_n)=(u \mathbf{e}_A+ \mathbf{e}_{A^c})$, with $A\subset [n]$, and  $|A|=d$, then, for any $k=1,...,n$,
one can write  the $n-1$-dimensional vectors appearing in~\eqref{def:C_n} as
$$(u_{\sigma_k(1)},  u_{\sigma_k(2)},...,u_{\sigma_{k}(n-1)})\equiv (u_{k},u_{k+1},..., u_n,...,u_{k-2})=(u \mathbf{e}_{B}+ \mathbf{e}_{B^c}),
$$
 where  $B$   is a suitable subset of $[n-1]$. Furthermore the cardinality $|B|$  assumes either the values $d$  or the value $d-1$, depending on the value of  $u_{k-1}$, namely
$$
|B|=
\begin{cases} d& \text{ if  $u_{\sigma_k(n)}\equiv u_{k-1}=1$}\vspace{2mm}
\\ d-1 & \text{ if $u_{\sigma_k(n)}\equiv u_{k-1}=u$},
\end{cases}
$$
where we have used  the convention that $u_0=u_n$.
\\

Starting from~\eqref{eq:recursive-delta-d} one can easily prove that
$$
\delta^{C_n}_d(u)= \alpha_{n,d}\,u^d+(1-\alpha_{n,d})\,  C(u,u) u^{d-2} , \quad 1\leq d\leq n,
$$
with
$$
\alpha_{n,1}=1,  \quad
\alpha_{n,d}=\frac{d}{n} \alpha_{n-1,d-1}+ \Big(1-\frac{d}{n}\Big) \alpha_{n-1,d}.
$$

Starting  from a fixed permutation $(\pi(1),...,\pi(n))\notin \{\sigma_k,\; k=1,...,n\}$, 
a similar procedure can be used to construct another DD $n$-dimensional copula (possibly different from~$C_n$), by using the $n$ cyclic permutations $\sigma_k\in \Pi_{\mathcal{C}}(1,2,...,n)$: 
$$
C_{n, \pi}(u_1,...,u_{n-1},u_n):= \frac{1}{n} \, \sum_{k=1}^{n} C_{n-1}\big(u_{\pi(\sigma_{k}(1))}, u_{\pi(\sigma_{k}(2))},...,
u_{\pi(\sigma_{k}(n-1))}\big)\cdot u_{\pi(\sigma_{k}(n))}.
$$
When the procedure is implemented recursively starting with  a fixed permutation $\pi \in \Pi([n])$, and with a not-symmetric copula $C_2(u,v)=C(u,v)$, as in Example~\ref{example:C-3-construction},
denote by $U_1,...,U_n$ the random variables associated to the copula of $C_{n,\pi}$, $n\geq 3$. Then for any fixed $i=1,...,n$ the $2$-dimensional marginals of $U_i, U_{i+1}$  (with the convention that $U_{n+1}=U_0$) 
are obtained recursively as
$$
C_{n,\pi}(u ,v,\overbrace{1,...,1}^{n-2})= \frac{1}{n}\left[(n-2)C_{n-1,\pi}(u,v,\overbrace{1,...,1}^{n-3})+ 2uv\right], \quad u,v\in [0,1],
$$
and therefore are  all equal, i.e.,
$$
C_{n, \pi}\big(u \mathbf{e}_{\{i\}}+v\mathbf{e}_{\{i+1\}}+ \mathbf{e}_{[n]\setminus\{i,i+1\}}\big)= C_{n,\pi}(u,v,1,....,1).
$$
Notice that therefore
$$
C_{n,\pi}(u,v,1,....,1)\neq C_{n,\pi}(v,u,1,....,1),
$$
so that the copulas $C_{n,\pi}$ are not symmetric.
\end{example}

Since we are particularly interested in examples with absolutely continuous joint and marginal distributions, observe that if we start this procedure with a absolutely continuous copula we get absolutely continuous copulas.

Furthermore
 we recall the class of absolutely continuous examples given in Navarro and Fer\-nan\-dez-San\-chez (2020) \cite{Navarro-Fernandez-Sanchez-2020} (see  in particular Proposition~1 therein).
The class in~\cite{Navarro-Fernandez-Sanchez-2020} may be seen as a particular case of the larger class considered in the next example.

\begin{example}[Negative mixtures of  DD copulas are DD]
\label{example:D+C1-C2}
Suppose that $D(u_1,...,u_r)$ is an absolutely continuous exchangeable copula with  probability density $d$ such that
$$
0<\underline d \, \rho(u_1,...,u_r)\leq d(u_1,...,u_r),
$$
for some positive function $\rho$.
\\
Let $C_i(u_1,...,u_r)$, $i=1,2,$ be two different copulas which are DD, but non-exchangeable, and absolutely continuous, with probability density $c_i(u_1,...,u_r)$ such that
$$
0\leq c_i(u_1,...,u_r)\leq \overline{c}\,\rho(u_1,...,u_r).
$$
Assume also that the function $c_1(u_1,...,u_r)-c_2(u_1,...,u_r)$ is not symmetric, and
define
\begin{equation}\label{eq:K-alpha}
K_\alpha(u_1,...,u_r):= D(u_1,...,u_r)+\alpha\big[C_1(u_1,...,u_r)-C_2(u_1,...,u_r)\big].
\end{equation}
\medskip

If $\alpha$ is strictly positive and sufficiently small, then
 $K_\alpha$ is an absolutely continuous DD copula, but not exchangeable.
 \bigskip

 We now proceed with  the proof of the previous statement.
\\

The function $K_\alpha$ defined in~\eqref{eq:K-alpha} has a density
$$
k_\alpha(u_1,...,u_r)= d(u_1,...,u_r)+\alpha \big[c_1(u_1,...,u_r)- c_2(u_1,...,u_r)\big],
$$
such that the integral
$$
\int_{[0,1]^n}k_\alpha(u_1,...,u_r) du_1\cdots du_r=1.
$$
Therefore 
 $k_\alpha$ is a probability density,  
 if and only if $k_\alpha(u_1,...,u_r)\geq 0$  for any  $(u_1,...,u_r)\in (0,1)^r$. The condition $\overline{c}\alpha \leq \underline{d}$ implies that
$$
k_\alpha(u_1,...,u_r) \geq \underline{d} \rho(u_1,...,u_r) +\alpha [0-\overline{c}\, \rho(u_1,...,u_r)]\geq 0
$$
The assumption on the densities may be weakened, for instance, it is clearly not necessary any assumption on the density of $C_1$. Furthermore the example could be generalized to the case
$$
K_{\alpha_1,...,\alpha_m}(u_1,...,u_r):= D+\sum_{k=1}^m \alpha_k\, \big[C_{1,k}(u_1,...,u_r)-C_{2,k}(u_1,...,u_r)\big],
$$
with suitable conditions on $\alpha_i$ and $C_{i,k}$, for $k=1,...,m$, $i=1,2$.

Finally it is interesting to note that $K_\alpha$ is a negative mixture of copulas,    	and that negative mixture of i.i.d.\@ random variable  are linked to finite exchangeability, and the problem of extendibility.
\end{example}

\end{document}